\theoremstyle{plain}
\newtheorem{teo}{Theorem}[section]
\newtheorem{lem}[teo]{Lemma}
\newtheorem{cor}[teo]{Corollary}
\newtheorem{prop}[teo]{Proposition}
\newtheorem*{teo*}{Theorem}
\theoremstyle{remark}
\newtheorem{oss}{Remark}[section]
\theoremstyle{definition}
\newtheorem{defi}[teo]{Definition}
\newtheorem*{defi*}{Definition}
\renewcommand{\phi}{\varphi}
\newcommandx{\checkk}[2][1=]{\todo[linecolor=red,backgroundcolor=red!25,bordercolor=red,#1]{#2}}
\newcommandx{\fix}[2][1=]{\todo[linecolor=blue,backgroundcolor=blue!25,bordercolor=blue,#1]{#2}}
\newcommandx{\improve}[2][1=]{\todo[linecolor=green,backgroundcolor=green!25,bordercolor=green,#1]{#2}}
\date{\today}
\title{A particle system approach to aggregation phenomena}
\author{Franco Flandoli\footnote{franco.flandoli@sns.it. Scuola Normale Superiore of Pisa, Italy.}, Marta Leocata\footnote{leocata@mail.dm.unipi.it. University of Pisa, Italy.}}
\begin{document}
\maketitle 
\begin{abstract}
Inspired by a PDE-ODE system of aggregation developed in the biomathematical
literature, an interacting particle system representing aggregation at the
level of individuals is investigated. It is proved that the empirical density
of the individual converges to solution of the PDE-ODE system.
\end{abstract}
\section{Introduction}

The mathematical literature applied to Biology and Social Science is rich of
models devoted to the description of aggregation. Motivations come from
several problems like embryo development, tissue homeostasis, tumor growth,
animal swarming and flocking. The literature presents heterogeneous
mathematical tools: discrete and continuous individual based model, ordinary
and partial differential equations (resp. ODE and PDE) and mixture of the
previous ones. Also because of this heterogeneity, an interesting issue is to
justify the PDE models through the investigation of scaling limits of models
based on interaction between individuals. Following this general program, in
this work we propose an individual based model and prove convergence, when the
number of individuals goes to infinity, to a class of PDE-ODE systems which
includes the so called Armstrong-Painter-Sherratt model proposed in
\cite{ArmPainShe}, \cite{PainArmShe}, evolution of a previous model of
\cite{Perumpani}, including in particular a form of delay by coupling the system
with an ODE.

We assume that individuals interact between each other by looking at the
density field produced by the others: think for instance to the motion of
animals in a swarm or a flock; presumably each animal moves driven by a
general overview of the others, not computing several pairwise interactions.
Let $N$ be the number of individuals and $X_{t}^{i,N}$, $i=1,...,N$, be their
positions. We model this particle-density interaction by the following
equations:%
\begin{equation}
dX_{t}^{i,N}=\int_{\mathbb{R}^{d}}\frac{y-X_{t}^{i,N}}{|y-X_{t}^{i,N}%
|}g(|y-X_{t}^{i,N}|,u_{t}^{N}(y),m_{t}^{N}(y))dydt+\sqrt{2}dB_{t}%
^{i}\label{micro_cells}%
\end{equation}
\begin{equation}
\frac{\partial m_{t}^{N}\left(  x\right)  }{\partial t}=-\lambda u_{t}%
^{N}(x)(m_{t}^{N}\left(  x\right)  )^{\zeta},\qquad x\in\mathbb{R}%
^{d}\label{micro_ecm}%
\end{equation}
for $i=1,...,N$ and $t\in\left[  0,T\right]  $, where $u_{t}^{N}(y)$ is a
density associated to the population of particles, defined below, $m_{t}%
^{N}\left(  x\right)  $ is the field which allows a dependence on the past or
may be used to model external effects like those of the Extracellular Matrix,
$\zeta$ is typically equal to 1 or 2, and $B_{t}^{i}$ are independent Brownian
motions $B_{t}^{i}$ accounting for a random component of the motion. Each
particle $X_{t}^{i,N}$ interacts with each location $y$; the direction of the
force is given by the unitary vector $\frac{y-X_{t}^{i,N}}{|y-X_{t}^{i,N}|}$
which spans the line between the particles; the strength of the interaction is
$g(|y-X_{t}^{i,N}|,u_{t}^{N}(y))$, namely it is modulated by the distance
$|y-X_{t}^{i,N}|$, by the density $u_{t}^{N}(y)$ and by the external field
$m_{t}^{N}(y)$. At positions $y$ where $g>0$, particle $X_{t}^{i,N}$ moves
towards $y$, namely have a tendency to aggregate. Using different functions
$g$ we may describe different kinds of attraction;a wide discussion is
presented in the last section of the paper. A technical issue concerns the
definition of the density $u_{t}^{N}(x)$, see the discussion below. Under
suitable assumptions, our main theorem is the convergence of the previous
particle model to the PDE-ODE system
\begin{align}
&  \frac{\partial u_{t}}{\partial t}=\Delta u_{t}-\text{div}(u_{t}%
b(u_{t},m_{t}))\nonumber\\
&  \frac{\partial m_{t}}{\partial t}=-\lambda u_{t}m_{t}^{\zeta}\label{eq:21}%
\end{align}
on $[0,T]\times\mathbb{R}^{d}$, where%
\begin{equation}
b(u,m)(x):=\int_{\mathbb{R}^{d}}\frac{y-x}{|y-x|}g(|y-x|,u(y),m\left(
y\right)  )dy.\label{def b}%
\end{equation}

Let us finally discuss the concept of density $u_{t}^{N}(x)$. Given the
particles $X_{t}^{i,N}$, one first associates to them the classical concept of
empirical measure:
\[
S_{t}^{N}\left(  dx\right)  :=\frac{1}{N}\sum_{i=1}^{N}\delta_{X_{t}^{i,N}%
}\left(  dx\right)  .
\]
Its direct use, however, in the previous modelling would oblige us to choose
functions $g$ depending on measures, instead of functions, which are less easy
to formulate in examples. And, more importantly, we could not speak of
$u_{t}^{N}\left(  y\right)  $, the density at position $y$. In numerics it is
common to overcome this difficulty by the so called \textit{kernel smoothing},
which consists in mollifying the measure by convolution with a smooth kernel.
We adopt this procedure. We choose a smooth, compactly supported, probability
density $W$ (the kernel) and rescale it with $N$ in a suitable way. A general
form of rescaling is%
\[
W_{N}(x):=N^{\beta}W(N^{\beta/d}x)
\]
for some $\beta\in\left(  0,1\right)  $, as suggested by K. Oelschlager
\cite{Oelsch}. The density $u_{t}^{N}(x)$ is thus given by%
\[
u_{t}^{N}(x):=\left(  W_{N}\ast S_{t}^{N}\right)  \left(  x\right)
=\sum_{i=1}^{N}W_{N}(x-X^{i,N}).
\]
Thanks to the semigroup approach that we implement in the estimates on the
particle system, we are able to consider any choice of $\beta\in\left(
0,1\right)  $. This is not a trivial task, since other approaches require more
restrictions on $\beta$, see \cite{Oelsch}, \cite{TrevisanNeklydov}.

The paper is structured as follows: in Section 2 we give some notations,
formulate the main result and prove some preliminary facts; in Section 3 we
prove tightness of the density $u_{t}^{N}(x)$ in suitable spaces; in Section 4
we show the passage to the limit and complete the proof of the main result;
finally in Section 5 we discuss several examples of interaction function $g$
and show by numerical simulations that the previous model may catch different
kinds of aggregation pattern.

\section{Notations and basic results}

\subsection{The particle system}

For every positive integer $N$, we consider a particle system described by
equations (\ref{micro_cells}) coupled with the random field $m_{t}^{N}\left(
x\right)  $ satisfying (\ref{micro_ecm}) for some integer $\zeta\geq1$, with
initial conditions $X_{0}^{i,N}=X_{0}^{i}$, $i=1,...,N$, where $B_{t}^{i}$,
$i\in\mathbb{N}$, is a sequence of independent Brownian motions on a filtered
probability space $\left(  \Omega,\mathcal{F},\mathcal{F}_{t},P\right)  $;
$X_{0}^{i}$, $i\in\mathbb{N}$, is a sequence of $\mathcal{F}_{0}$-measurable
independent random variables with values in $\mathbb{R}^{d}$, identically
distributed with density $u_{0}$; the random function $u_{t}^{N}$ is given by
$u_{t}^{N}(x):=\left(  W_{N}\ast S_{t}^{N}\right)  \left(  x\right)  $ where
$S_{t}^{N}=\frac{1}{N}\sum_{i=1}^{N}\delta_{X_{t}^{i,N}}$ and $W_{N}%
(x):=N^{\beta}W(N^{\beta/d}x)$ for some $\beta\in\left(  0,1\right)  $; the
random fields $m_{t}^{N}$ have initial conditions $m_{0}^{N}\left(  x\right)
=m_{0}\left(  x\right)  $ where $m_{0}:\mathbb{R}^{d}\rightarrow\mathbb{R}$ is
a measurable function with $0\leq m_{0}\leq M$, and the functional
\[
b:L^{2}(\mathbb{R}^{d})\times L^{2}(\mathbb{R}^{d})\rightarrow L^{\infty
}(\mathbb{R}^{d})
\]
is given by (\ref{def b}) where $g:\mathbb{R}^{+}\times\mathbb{R}^{+}%
\times\mathbb{R}^{+}\rightarrow\mathbb{R}$, $g=g(r,u,m)$, is differentiable,
bounded with bounded derivatives, and satisfies%
\begin{equation}\label{hyp:g}
|g(r,u,m)|+|\nabla g(r,u,m)|\leq C\cdot\exp(-r)
\end{equation}
for some constant $C>0$ (where $\nabla g$ denotes the gradient in all
variables). It follows that, for every pair of measurable functions
$u(x),m(x)$, the aggregation force is bounded:
\begin{equation}
|b(u,m)(x)|\leq\int_{\mathbb{R}^{d}}\left\vert g(|y-x|,u(y),m(y))\right\vert
dy\leq C\int_{\mathbb{R}^{d}}e^{-|x-y|}dy:=C^{\prime}<\infty.\label{bounded b}%
\end{equation}
We also have%
\begin{equation}
|b(u,m)(x)-b(u^{\prime},m^{\prime})(x)|\leq C\cdot\int_{\mathbb{R}^{d}%
}e^{-|x-y|}\left(  \left\vert u\left(  y\right)  -u^{\prime}\left(  y\right)
\right\vert +\left\vert m\left(  y\right)  -m^{\prime}\left(  y\right)
\right\vert \right)  dy\label{lip 1 b}%
\end{equation}
and regarding the derivative, due to the condition on the gradient of $g$:
\begin{multline}
\label{lip 2 b} 
|\nabla_{x}\cdot b(u,m)(x)|\leq\left| \int_{\mathbb{R}^{d}}\nabla_x\cdot\left(\frac{y-x}{|y-x|} g(|y-x|,u(y),m(y))\right)dy\right|\leq\\
|g(0,u(x),m(x))|+\left|\int_{\mathbb{R}^{d}}\frac{y-x}{|y-x|}\cdot\nabla_x\left(g(|y-x|,u(y),m(y))\right)dy\right|\leq\\
C_1+\int_{\mathbb{R}^d}|\partial_r g(|y-x|,u(y),m(y))|dy\leq C_1+C_2.
\end{multline}
Under these assuptions, existence and uniqueness of a solution, for finite
$N$, of system (\ref{micro_cells})-(\ref{micro_ecm}) can be proved by
classical methods. Let us explain some details. Let us denote by $C\left(
L^{2}\right)  $, $C_{+}\left(  L^{2}\right)  $, $C_{0,M}\left(  L^{2}\right)$,  the spaces 
%
%
%
\[
C\left(  L^{2}\right)     :=C([0,T],L^{2}(\mathbb{R}^{d}))\]
\[
C_{+}\left(  L^{2}\right)    :=\left\{  u\in C\left(  L^{2}\right)
:u_{t}\geq0\text{ for all }t\in\left[  0,T\right]  \right\} \]
\[
C_{0,M}\left(  L^{2}\right)     =\left\{  m\in C\left(  L^{2}\right)  :0\leq
m_{t}\leq M\text{ for all }t\in\left[  0,T\right]  \right\} \]

We say that a random field $m_{t}^{N}\left(  x\right)  $, $t\in\left[
0,T\right]  $, $x\in\mathbb{R}^{d}$ defined on $\left(  \Omega,\mathcal{F}%
,\mathcal{F}_{t},P\right)  $, is adapted of class $C_{0,M}\left(
L^{2}\right)  $ if $P$-a.s. the functions $\left(  t,x\right)  \mapsto
m_{t}^{N}\left(  x\right)  $ belong to $C_{0,M}\left(  L^{2}\right)  $ and for
every $t\in\left[  0,T\right]  $ the function $\left(  x,\omega\right)
\mapsto m_{t}^{N}\left(  x,\omega\right)  $ is $\mathcal{B}\left(
\mathbb{R}^{d}\right)  \times\mathcal{F}_{t}$-measurable. We say that
$\left(  X^{1,N},...,X^{N,N},m^{N}\right)  $ is a strong solution of system
(\ref{micro_cells})-(\ref{micro_ecm}) if $X_{t}^{1,N},...,X_{t}^{N,N}$ are
continuous adapted processes on $\left(  \Omega,\mathcal{F},\mathcal{F}%
_{t},P\right)  $, $m_{t}^{N}\left(  x\right)  $ is adapted of class
$C_{0,M}\left(  L^{2}\right)  $, all defined on $\left(  \Omega,\mathcal{F}%
,\mathcal{F}_{t},P\right)  $, and identities (\ref{micro_cells}%
)-(\ref{micro_ecm}) hold, with the equations understood integrated in time. We
say that pathwise uniqueness hold if two such solutions are indistinguishable processes.

\begin{prop}
\label{prop particles}Given any positive integer $N$ and any function
$m_{0}\in L^{2}(\mathbb{R}^{d})$ such that $0\leq m_{0}\leq M$, there exists a
strong solution of system (\ref{micro_cells})-(\ref{micro_ecm}) and pathwise
uniqueness hold.
\end{prop}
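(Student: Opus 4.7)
The plan is to reduce the coupled system to a path-dependent SDE for the particles alone by integrating (\ref{micro_ecm}) explicitly. Given any adapted, nonnegative field $u^N_{\cdot}(x)$, separation of variables yields for $\zeta=1$
\[
m^N_t(x) = m_0(x)\exp\!\Bigl(-\lambda\int_0^t u^N_s(x)\,ds\Bigr),
\]
and a similar closed form for $\zeta\geq 2$. In all cases $0 \leq m^N_t(x) \leq m_0(x) \leq M$ pointwise; since $m^N \leq m_0 \in L^2$ and dominated convergence gives continuity in time, $m^N$ is automatically of class $C_{0,M}(L^2)$, and the map $u^N \mapsto m^N$ is Lipschitz (locally in $L^\infty$) with constant depending only on $\lambda, M, T, \zeta$.

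After this reduction, it suffices to solve the particle SDE
\[
dX^{i,N}_t = b_i^{\mathbf{X}}(t)\,dt + \sqrt{2}\,dB^i_t, \qquad X^{i,N}_0 = X^i_0,
\]
where $b_i^{\mathbf{X}}(t) := b(u^N_t, m^N_t)(X^{i,N}_t)$, $u^N_t = W_N * \tfrac{1}{N}\sum_j \delta_{X^{j,N}_t}$ depends smoothly on the current positions, and $m^N_t$ is the explicit functional of $u^N_{[0,t]}$ above. By (\ref{bounded b}) the drift is uniformly bounded by $C'$. I would solve this SDE by Picard iteration $\mathbf{X}^{(n+1)} = \Phi(\mathbf{X}^{(n)})$, with
\[
\Phi(\mathbf{X})^i_t := X^i_0 + \int_0^t b_i^{\mathbf{X}}(s)\,ds + \sqrt{2}\,B^i_t,
\]
on the complete metric space of continuous $(\mathbb{R}^d)^N$-valued adapted processes with norm $\mathbf{X} \mapsto \EE{\sup_{t\leq T_0} \max_i |X^{i,N}_t|^2}^{1/2}$, for a short horizon $T_0$ depending on $N$ through the Lipschitz constant below, and then glue finitely many pieces to cover $[0,T]$.

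The key Lipschitz estimate combines three ingredients: smoothness and compact support of $W_N$, which give $\|u^N_t - \tilde u^N_t\|_{L^\infty\cap L^1} \leq C(N)\max_j|X^{j,N}_t - \tilde X^{j,N}_t|$; the Lipschitz bound for $u^N\mapsto m^N$ above; and the splitting
\[
b_i^{\mathbf{X}}(t) - b_i^{\tilde{\mathbf{X}}}(t) = \bigl[b(u^N_t,m^N_t)(X^{i,N}_t) - b(u^N_t,m^N_t)(\tilde X^{i,N}_t)\bigr] + \bigl[b(u^N_t,m^N_t) - b(\tilde u^N_t,\tilde m^N_t)\bigr](\tilde X^{i,N}_t),
\]
where the first bracket is controlled by the gradient bound leading to (\ref{lip 2 b}) and the second by (\ref{lip 1 b}). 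This yields an estimate of the form $L_N\bigl(\max_j|X^{j,N}_t - \tilde X^{j,N}_t| + \int_0^t \max_j|X^{j,N}_s - \tilde X^{j,N}_s|\,ds\bigr)$, from which a standard Grönwall/contraction argument produces a unique fixed point of $\Phi$.

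The main technical subtlety lies precisely in that Lipschitz estimate for $b$: the unit vector $(y-x)/|y-x|$ is singular at $y=x$, so one cannot differentiate naively under the integral. The exponential decay in (\ref{hyp:g}) is needed to absorb the tail of the integral in the difference in $(u,m)$, and the rewriting via divergence already displayed in (\ref{lip 2 b}) takes care of the dependence on the base point by transferring the derivative onto $g$ at the price of a harmless boundary-type term $g(0,u(x),m(x))$. Beyond this point the argument is classical SDE theory: adaptedness and continuity of the particle solution are inherited from the Picard iteration, pathwise uniqueness is immediate from the contraction, and the corresponding $m^N$ is adapted of class $C_{0,M}(L^2)$ directly from its closed-form expression.
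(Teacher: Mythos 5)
Your proposal is correct and follows essentially the same route as the paper: solve the ODE for $m^N$ explicitly via $F_\zeta$, reduce to a closed path-dependent SDE for the particles, and run a Picard/contraction argument on short time intervals (using the boundedness of $b$, the Lipschitz estimates \eqref{lip 1 b}--\eqref{lip 2 b}, the smoothness of $W_N$, and the Lipschitz dependence of $m^N$ on $u^N$), then iterate to cover $[0,T]$. If anything, your treatment of the Lipschitz estimate for the drift (the splitting into base-point and field dependence) is spelled out in more detail than in the paper, which simply asserts global Lipschitz continuity from \eqref{lip 2 b}.
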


\begin{proof}
The proof is classical, we explain only the idea. Given an integer $\zeta
\geq1$, $u^{N}$ and a.e. $x\in\mathbb{R}^{d}$, the solution of equation
(\ref{micro_ecm}) is global, unique and explicit:%
\[
m_{t}^{N}(x)=F_{\zeta}\left(  m_{0}(x),\int_{0}^{t}u_{s}^{N}(x)ds\right)
\]%
\[
F_{\zeta}\left(  a,b\right)  =a\cdot\widetilde{F}_{\zeta}\left(  a,b\right)
\]%
\[
\widetilde{F}_{\zeta}\left(  a,b\right)  =%
\begin{cases}
\exp\left(  -\lambda b\right)  \quad\text{if }\zeta=1\\
\frac{1}{\left[  a^{\zeta-1}(\zeta-1)\lambda b+1\right]  ^{\frac{1}{\zeta-1}}%
}\quad\text{if }\zeta\geq2\text{.}%
\end{cases}
\]
The function $\widetilde{F}_{\zeta}:\left[  0,M\right]  \times\lbrack
0,\infty)\rightarrow\mathbb{R}$ is bounded and the function $F_{\zeta}:\left[
0,M\right]  \times\lbrack0,\infty)\rightarrow\mathbb{R}$ is Lipschitz
continuous, with at most linear growth in $a$, uniformly in $b$. Then one may
consider the system of integral equations%
\begin{equation}
X_{t}^{i,N}=X_{0}^{i}+\int_{0}^{t}b(u_{s}^{N},F_{\zeta}\left(  m_{0}%
(\cdot),\int_{0}^{s}u_{r}^{N}(\cdot)dr\right)  )(X_{s}^{i,N})ds+\sqrt{2}%
B_{t}^{i}\quad\text{i=1,\dots,N} \label{reduced system}%
\end{equation}
as a closed system, with only the variables $X_{t}^{1,N},...,X_{t}^{N,N}$. It
is a path-dependent equation: the past appears in the drift; but this does not
change the way contraction principle applies. One can check that strong
existence and pathwise uniqueness for the original system for the variables
$\left(  X^{1,N},...,X^{N,N},m^{N}\right)  $ is equivalent to strong existence
and pathwise uniqueness for this reduced path-dependent system in the
variables $\left(  X^{1,N},...,X^{N,N}\right)  $ only;\ property $m^{N}\in
C_{0,M}\left(  L^{2}\right)  $ is deduced from the explicit formula.
Let us
say how to prove existence and uniqueness for (\ref{reduced system}).
Thanks to the property \eqref{lip 2 b} the drift of equation
(\ref{reduced system}) is globally Lipschitz continuous. We define the family of maps $J^i$ as
\begin{multline*}
J^i:E\to \mathbb{R}\quad J^i(Y):=X_{0}^{i}+\int_{0}^{t}b(u_{s}^{N},F_{\zeta}\left(  m_{0}%
(\cdot),\int_{0}^{s}u_{r}^{N}(\cdot)dr\right)  )(Y)ds+\sqrt{2}
B_{t}^{i}\\
i=1,\dots,N
\end{multline*}
where $E=L^2_{\mathcal{F}}(\Omega, C([0,T'],\mathbb{R}^d))$, with $T'<T$. Then with classical computation we get that $J^i$ is a contraction on the space E:
\[\left|\left|J^i(Y)-J^i(Y')\right|\right|_E\leq CT'\left|\left|Y-Y'\right|\right|_E\]
choosing $CT'<1$. Hence local existence and uniqueness of strong solutions is proved. Iterating this argument one can get the global existence result, because the amplitude of the interval of iteration depends only on $CT'$, namely it is fixed for each iteration. 
\end{proof}

\begin{oss}
Existence and uniqueness of solution of the system (\ref{micro_cells})-(\ref{micro_ecm}) could be obtained following another approach. With less effort could be possible to obtain just weak existence and uniqueness in law for the system (\ref{micro_cells})-(\ref{micro_ecm}): the method of creating weak solution to SDEs is transformation of drift via Girsanov theorem, see \cite{KaratzasShreve}. 
Being the drift $b$ bounded, see condition \eqref{bounded b}, hypotheses of  Proposition 3.6 and Proposition 3.10 of \cite{KaratzasShreve} are verified and existence and uniqueness of the system is obtained.
 Then $X^{i,N}_t$ is solution of (\ref{micro_cells}). Thus also $m^N_t$ exists, is unique and explicit. This kind of existence would be enough for the purpose of the paper, but we still to decide to emphasize in Proposition \ref{prop particles} that a stronger result is attainable.
\end{oss}

\subsection{Main results\label{sect main res}}

After the indentity of Lemma \ref{lem:1} below for the empirical measure is
proved, it is natural to conjecture that the limit of the pair $\left(
u_{t}^{N}\left(  x\right)  ,m_{t}^{N}\left(  x\right)  \right)  $ solves the
system (\ref{eq:21}) with initial condition $\left(  u_{0},m_{0}\right)  $,
where $u_{0}$ is the density of the r.v.'s $X_{0}^{i}$ and $m_{0}$ is the
limit of $m_{0}^{N}$. We interpret the first equation of this system in the so
called mild form and the second one in integral form. Concerning the
initial conditions, we make a choice of simplicity. We assume that $u_{0}:\mathbb{R}^{d}%
\rightarrow\mathbb{R}$ (the initial distribution of individuals) is a
probability density of class $C^{1}$ with compact support, see Lemma \ref{lem:initial_condition}. About
$m_{0}:\mathbb{R}^{d}\rightarrow\mathbb{R}$, we assume it is of class
$L^{2}(\mathbb{R}^{d})$ and $0\leq m_{0}\leq M$.

\begin{defi}
By mild solution of system (\ref{eq:21}) we mean a pair $\left(  u,m\right)  $
belonging to $C_{+}\left(  L^{2}\right)  \times C_{0,M}\left(  L^{2}\right)  $
such that
\begin{align*}
&  u_{t}(x)=e^{t\Delta}u_{0}+\int_{0}^{t}\nabla\cdot e^{(t-s)\Delta}%
(u_{s}b(u_{s},m_{s}))ds\\
&  m_{t}(x)=m_{0}(x)-\int_{0}^{t}\lambda u_{s}(x)m_{s}^{\zeta}(x)ds.
\end{align*}

\end{defi}

Where $e^{tA}$ denote the heat semigroup, more precisely defined in Section \ref{sect semigr}. Notice that the $L^{2}(\mathbb{R}^{d})$-norm of $u_{s}b(u_{s},m_{s})$ is
bounded, since $b$ is bounded and $u\in C\left(  L^{2}\right)  $. Hence
$\nabla\cdot e^{(t-s)\Delta}(u_{s}b(u_{s},m_{s}))$ is integrable by property
(\ref{prop:2}) below. Convergence of the particles system is proved only
locally in space, hence we introduce the space%
\[
C\left(  L_{loc}^{2}\right)  :=C([0,T],L_{loc}^{2}(\mathbb{R}^{d}))
\]
where the topology on $L_{loc}^{2}(\mathbb{R}^{d})$ is given by the metric%
\[
d_{L_{loc}^{2}}\left(  f,g\right)  =\sum_{n=1}^{\infty}2^{-n}\left(
\left\Vert f-g\right\Vert _{L^{2}\left(  B\left(  0,n\right)  \right)  }%
\wedge1\right)  .
\]

\begin{teo}
\label{main thm}System (\ref{eq:21}) has one and only one mild solution
$\left(  u,m\right)  $ in $C_{+}\left(  L^{2}\right)  \times C_{0,M}\left(
L^{2}\right)  $; and the pair $\left(  u^{N},m^{N}\right)  $ converges to
$\left(  u,m\right)  $ in $C\left(  L_{loc}^{2}\right)  \times C\left(
L_{loc}^{2}\right)  $, in probability.
\end{teo}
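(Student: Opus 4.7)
The plan is to combine three independent ingredients, following the roadmap sketched in the introduction: (i) uniqueness of the mild solution in $C_+(L^2)\times C_{0,M}(L^2)$; (ii) tightness of the laws of $(u^N,m^N)$ in $C(L^2_{loc})\times C(L^2_{loc})$; (iii) identification of every subsequential limit as \emph{the} mild solution. Since the limit will be deterministic, (i) upgrades convergence in law along subsequences to convergence in probability of the full sequence.

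First, for uniqueness, given two mild solutions $(u,m),(u',m')$ I would estimate the difference componentwise. The $m$-equation is handled via the explicit formula $m_t(x)=F_\zeta(m_0(x),\int_0^t u_s(x)ds)$ from Proposition \ref{prop particles} and the Lipschitz dependence of $F_\zeta$, giving $\|m_t-m'_t\|_{L^2}\leq C\int_0^t\|u_s-u'_s\|_{L^2}ds$. For the $u$-equation I would use the heat-semigroup smoothing $\|\nabla e^{t\Delta}f\|_{L^2}\leq C t^{-1/2}\|f\|_{L^2}$ combined with the Lipschitz bound (\ref{lip 1 b}) on $b$ and the $L^\infty$ bound (\ref{bounded b}), producing the weakly singular inequality
\[
\|u_t-u'_t\|_{L^2}\leq C\int_0^t (t-s)^{-1/2}\bigl(\|u_s-u'_s\|_{L^2}+\|m_s-m'_s\|_{L^2}\bigr)ds,
\]
which closes by a singular Grönwall argument.

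Next, tightness of $(u^N,m^N)$ in $C(L^2_{loc})\times C(L^2_{loc})$ is the content of Section 3, so I would take it as given and extract a subsequential limit $(u,m)$. To identify it, I apply Itô's formula to $\phi(X^{i,N}_t)$, average over $i$, and obtain for the empirical measure
\[
\langle S^N_t,\phi\rangle=\langle S^N_0,\phi\rangle+\int_0^t\langle S^N_s,\Delta\phi+b(u^N_s,m^N_s)\cdot\nabla\phi\rangle ds+M^{N,\phi}_t,
\]
with a martingale term whose quadratic variation is $O(1/N)$. Testing against $\phi=W_N*\psi$ and rewriting in mild form for $u^N_t=W_N*S^N_t$, the nonlinear contribution becomes $W_N*(b(u^N_s,m^N_s)\cdot S^N_s)$, which I would split as
\[
W_N*\bigl(b(u^N_s,m^N_s)\cdot S^N_s\bigr) = u^N_s\,b(u^N_s,m^N_s)+R^N_s,
\]
where $R^N_s$ is a commutator of $b$ against the scale-$N^{-\beta/d}$ kernel $W_N$; this vanishes in $L^2_{loc}$ because $b$ is Lipschitz by (\ref{lip 2 b}). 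Passing to the limit then uses the continuity of $(\rho,m)\mapsto\rho\,b(\rho,m)$ on $L^2_{loc}$-bounded sets together with (\ref{bounded b}); the $m$-equation is identified via its explicit formula and the stability of $F_\zeta$ under $L^2_{loc}$-convergence of $u^N$.

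The hardest point will be controlling the mild-form nonlinearity uniformly in $\beta\in(0,1)$: naively, applying $\nabla\cdot e^{(t-s)\Delta}$ to a quantity that has been convolved with $W_N$ could produce a factor $N^{\beta/d}$ from $\nabla W_N$. To circumvent this, I would use the splitting $\nabla e^{t\Delta}=\nabla e^{(t/2)\Delta}\,e^{(t/2)\Delta}$ and the bound $\|\nabla e^{t\Delta}\|_{L^2\to L^2}\lesssim t^{-1/2}$, transferring the derivative onto the heat kernel rather than onto $W_N$ and absorbing the resulting time singularity by the same singular Grönwall scheme used in step (i). This is the semigroup technique advertised in the introduction and is the reason the whole range $\beta\in(0,1)$ should be accessible, in contrast to the restrictions appearing in \cite{Oelsch} and \cite{TrevisanNeklydov}.
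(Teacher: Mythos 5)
Your proposal follows essentially the same route as the paper's proof: tightness from Section 3, identification of every subsequential limit through the empirical-measure identity of Lemma \ref{lem:1} together with the commutator estimate $W_{N}\ast\left(b(u^{N},m^{N})S^{N}\right)-u^{N}b(u^{N},m^{N})=O(N^{-\beta/d})\,u^{N}$ and a vanishing martingale term, uniqueness via the explicit formula $m_{t}=F_{\zeta}\left(m_{0},\int_{0}^{t}u_{s}ds\right)$ reducing to a closed equation for $u$ handled by a singular Gr\"onwall argument, and finally the deterministic-limit upgrade from convergence in law to convergence in probability. The only deviations are cosmetic: the paper passes to the limit in the weak formulation against fixed $C_{c}^{\infty}$ test functions (via the functional $\Psi_{\phi}$ and Portmanteau) and only afterwards recovers the mild form, the $C(L^{2})$ regularity (Lemma \ref{lemma regularity}) and the sign constraints, whereas you work in the mild form directly; and your worry about a factor $N^{\beta/d}$ in the drift is moot, since the derivative always falls on the heat semigroup there --- the genuine $\beta$-dependence sits in the martingale estimate of Lemma \ref{lem:4}, i.e.\ inside the tightness part you take as given.
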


\subsection{Some useful properties of Analytic Semigroup\label{sect semigr}}
We denote with $W^{\alpha,2}(\mathbb{R}^d)$ the fractional sobolev space, which is a Banach space with the norm 
\[
\left|\left|f\right|\right|_{W^{\alpha,2}(\mathbb{R}^d)}=\left|\left| f\right|\right|_{L^2(\mathbb{R}^d)}+[f]_{\alpha,2,\mathbb{R}^d}
\]
where $$[f]_{\alpha,2,\mathbb{R}^d}=\int_{\mathbb{R}^d}\int_{\mathbb{R}^d} \frac{|f(x)-f(y)|^2}{|x-y|^{2\alpha+d}}dxdy.$$
Or equivantely
\begin{equation*}
\label{eq:u0}
\left|\left|u^N_0\right|\right|_{W^{\alpha,2}}=\left\Vert u_{0}^{N}\right\Vert _{L^{2}\left(
\mathbb{R}^{d}\right)  } + \left\Vert \left(
-\Delta\right)  ^{\alpha}u_{0}^{N}\right\Vert _{L^{2}\left(  \mathbb{R}%
^{d}\right)  }.
\end{equation*}
where the fractional laplacian can be characterized by the following lemma.

\begin{lem}
Let $s\in\left(  0,1\right)  $. Then there exists a constant $C\left(
s,d\right)  $ such that
\[
\left(  -\Delta\right)  ^{s}f\left(  x\right)  =C\left(  s,d\right)
\int_{\mathbb{R}^{d}}\frac{f\left(  x+y\right)  +f\left(  x-y\right)
-2f\left(  x\right)  }{\left\vert y\right\vert ^{d+2s}}dy,\qquad
x\in\mathbb{R}^{d}%
\]
for every compact support twice differentiable function $f$.
\end{lem}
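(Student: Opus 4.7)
The plan is to prove the identity via the Fourier-analytic definition of the fractional Laplacian, namely $\widehat{(-\Delta)^s f}(\xi)=|\xi|^{2s}\widehat{f}(\xi)$, and show the singular integral on the right-hand side has the same Fourier transform up to a constant.

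First I would verify that the integral on the right is absolutely convergent for any $f\in C^2_c(\mathbb{R}^d)$. Near $y=0$, a second order Taylor expansion gives
\[
f(x+y)+f(x-y)-2f(x)=\int_0^1(1-\tau)\bigl(y^{T}\nabla^{2}f(x+\tau y)+y^{T}\nabla^{2}f(x-\tau y)\bigr)y\,d\tau,
\]
so the integrand is bounded by $C\|f\|_{C^2}|y|^{2-d-2s}$, which is integrable on $\{|y|\le 1\}$ since $s<1$. Far from the origin, since $f$ is bounded and compactly supported, the integrand is dominated by a multiple of $|y|^{-d-2s}$, again integrable. Therefore we may apply Fubini in what follows.

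Next I would compute the Fourier transform of the operator
\[
T_s f(x):=\int_{\mathbb{R}^{d}}\frac{f(x+y)+f(x-y)-2f(x)}{|y|^{d+2s}}\,dy.
\]
By the translation rule and Fubini,
\[
\widehat{T_s f}(\xi)=\widehat{f}(\xi)\int_{\mathbb{R}^{d}}\frac{e^{i\xi\cdot y}+e^{-i\xi\cdot y}-2}{|y|^{d+2s}}\,dy=-2\widehat{f}(\xi)\int_{\mathbb{R}^{d}}\frac{1-\cos(\xi\cdot y)}{|y|^{d+2s}}\,dy.
\]
A straightforward change of variables $y\mapsto y/|\xi|$ extracts a factor $|\xi|^{2s}$, and rotational invariance of the measure $dy/|y|^{d+2s}$ reduces the remaining integral to $\int (1-\cos(e_1\cdot z))|z|^{-d-2s}\,dz$, a finite positive constant depending only on $s$ and $d$. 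Hence $\widehat{T_s f}(\xi)=-2\kappa(s,d)|\xi|^{2s}\widehat{f}(\xi)$.

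Finally, setting $C(s,d)=-1/(2\kappa(s,d))$, the operator $C(s,d)T_s$ and $(-\Delta)^s$ have the same Fourier symbol $|\xi|^{2s}$, and since $f\in \mathcal{S}(\mathbb{R}^d)$ (being $C^2_c$ it is in particular Schwartz class) the inversion formula yields pointwise equality. The only delicate point in the argument is the integrability and Fubini step near the singularity at $y=0$, handled by the $C^2$ Taylor bound above; the rest is a scaling computation.
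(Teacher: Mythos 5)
Your Fourier-multiplier argument is essentially correct, but note that the paper does not actually prove this lemma: it quotes the standard pointwise characterization of $(-\Delta)^{s}$ and only remarks on why the integral converges (boundedness of $f$ controls large $|y|$, second-order cancellation controls small $|y|$), which is precisely your first step. Your proposal therefore supplies more than the paper does: the absolute-convergence estimate, the Fubini computation giving the symbol $-2\int_{\mathbb{R}^d}(1-\cos(\xi\cdot y))|y|^{-d-2s}\,dy$, the scaling and rotation reduction to a finite positive constant $\kappa(s,d)$, and the identification with the multiplier $|\xi|^{2s}$. This is the classical route to the identity and fits naturally with the paper's spectral use of fractional powers $(I-A)^{\delta}$ elsewhere.

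Two points need repair, one of them a genuine error of justification. Your parenthetical claim that a $C^{2}$ compactly supported function is in the Schwartz class is false ($\mathcal{S}$ requires smoothness), so the inversion step cannot be justified that way. It survives with the correct argument: for $f\in C^{2}_{c}$, two integrations by parts give $|\hat f(\xi)|\le C(1+|\xi|)^{-2}$, and since $2s<2$ the function $|\xi|^{2s}\hat f$ is in $L^{1}$; hence $(-\Delta)^{s}f$, defined as the tempered distribution with Fourier transform $|\xi|^{2s}\hat f$, is a continuous function, $T_{s}f$ is an $L^{1}$ (indeed continuous) function with the same Fourier transform up to the constant, and equality follows a.e.\ and then everywhere by continuity. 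Secondly, for Fubini you need joint integrability in $(x,y)$, not just convergence for fixed $x$: near $y=0$ combine your Taylor bound with the fact that the second difference is supported, in $x$, in a fixed compact set, and for $|y|\ge 1$ integrate in $x$ first to get the bound $4\|f\|_{L^{1}}|y|^{-d-2s}$. Both fixes are routine, and with them your proof is complete.
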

Notice that boundedness of $f$ guarantees integrability at infinity, while
twice differentiability implies that the numerator is, for small $\left\vert
y\right\vert $, infinitesimal of order two, which compensates the singularity
of the denominator.
Another very useful property on the fractional laplacian is that it is a local operator, namely it preserves compact support of functions. 

Let us recall some well known properties of analytical semigroups. The family
of operators
\[
\left(  e^{tA}f\right)  \left(  x\right)  :=\int_{\mathbb{R}^{d}}\frac
{1}{(2\pi \sigma^2t)^{d/2}}e^{-\frac{|x-y|^{2}}{2\sigma^2t}}f(y)dy
\]
for $t\geq0$, defines an analytic semigroup (the heat semigroup) on the space
$W^{\alpha,2}(\mathbb{R}^{d})$, for every $\alpha\geq0$. The infinitesimal
generator in $L^{2}(\mathbb{R}^{d})$ is the operator $A:D(A)\subset
L^{2}(\mathbb{R}^{d})\longrightarrow L^{2}(\mathbb{R}^{d})$, $D(A)=W^{2,2}%
(\mathbb{R}^{d})$, given by $Af=\frac{\sigma^2}{2}\Delta f$. It is possible to define fractional
power of the operator $(I-A)^{\delta}$ for $\delta\in\mathbb{R}$ and a well
known fact is the equivalence of norms:
\begin{equation}
||(I-A)^{\delta/2}f||_{L^{2}}\sim||f||_{W^{\delta,2}} \label{prop1_semigroup}%
\end{equation}
Another property, often used in the sequel, is that for every $\delta,T>0$

there is a constant $C_{\delta,T}$ such that for $t\in(0,T]$
\begin{equation}
||(I-A)^{\delta}e^{tA}||_{L^{2}\rightarrow L^{2}}\leq\frac{C_{\delta,T}%
}{t^{\delta}}. \label{prop2_semigroup}%
\end{equation}
Finally, we remark that the operator $\nabla(I-A)^{-1/2}$ is bounded in
$L^{2}$
\begin{equation}
||\nabla(I-A)^{-1/2}||_{L^{2}\rightarrow L^{2}}\leq C \label{prop3_semigroup}%
\end{equation}
where, here and below, we continue to write simply $L^{2}$ also when the
functions are vector valued, as in the case of $\nabla(I-A)^{-1/2}f$.

It will be useful to know the following result on the improvement of regularity.

\begin{lem}
\label{lemma regularity}If $u\in L^{p}\left(  0,T;L^{2}(\mathbb{R}%
^{d})\right)  $ for some $p>2$ and satisfies
\[
u_{t}(x)=e^{tA}u_{0}+\int_{0}^{t}\nabla\cdot e^{(t-s)A}(u_{s}%
b_{s})ds
\]
for some bounded measurable function $b$, then $u\in C([0,T],L^{2}%
(\mathbb{R}^{d}))$.
\end{lem}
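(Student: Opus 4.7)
The plan is to decompose $u_t = e^{tA}u_0 + J(t)$ with $J(t) = \int_0^t \nabla \cdot e^{(t-s)A}(u_s b_s)\,ds$. The first summand is continuous from $[0,T]$ to $L^2(\mathbb{R}^d)$ by the strong continuity of the heat semigroup, so everything reduces to showing $J \in C([0,T],L^2)$.

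First I would record the pointwise-in-time estimate obtained by factoring $\nabla \cdot e^{\tau A} = [\nabla(I-A)^{-1/2}]\,[(I-A)^{1/2} e^{\tau A}]$. Combining property (\ref{prop3_semigroup}), property (\ref{prop2_semigroup}) with $\delta = 1/2$, and the pointwise bound $\|u_s b_s\|_{L^2} \leq \|b\|_\infty \|u_s\|_{L^2}$, one gets
\[
\bigl\|\nabla \cdot e^{(t-s)A}(u_s b_s)\bigr\|_{L^2} \leq \frac{C\,\|b\|_\infty}{\sqrt{t-s}}\,\|u_s\|_{L^2}.
\]
Hölder in $s$ with conjugate exponents $p, p'$ (and $p' < 2$, which is exactly the content of the hypothesis $p > 2$) makes the singularity $(t-s)^{-1/2}$ integrable against $\|u_{\cdot}\|_{L^2} \in L^p(0,T)$, so the integral defining $J(t)$ converges in the Bochner sense and $t \mapsto J(t)$ is bounded in $L^2$.

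For continuity at $t \in [0,T]$, fix $h > 0$ (the case $h < 0$ is analogous) and split
\begin{align*}
J(t+h) - J(t) = {}& \int_t^{t+h} \nabla \cdot e^{(t+h-s)A}(u_s b_s)\,ds \\
& + \int_0^t \nabla \cdot \bigl[e^{(t+h-s)A} - e^{(t-s)A}\bigr](u_s b_s)\,ds.
\end{align*}
The first integral tends to $0$ in $L^2$ by the same Hölder estimate restricted to $[t,t+h]$: its $L^2$-norm is bounded by a power of $h$ times $\|u\|_{L^p([t,t+h];L^2)}$, both of which vanish as $h \to 0$. The second integral tends to $0$ in $L^2$ by the Bochner dominated convergence theorem: for each $s \in [0,t)$ the integrand tends to $0$ in $L^2$ because $[e^{hA} - I]$ converges strongly to $0$ on the $L^2$-vector $(I-A)^{1/2}e^{(t-s)A}(u_s b_s)$, while the $L^2$-norms of the integrand are dominated uniformly in small $h$ by $2C\|b\|_\infty (t-s)^{-1/2}\|u_s\|_{L^2}$, which lies in $L^1(0,t)$ again thanks to $p > 2$.

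The main obstacle is the temporal singularity $(t-s)^{-1/2}$ in the semigroup estimate for $\nabla e^{\tau A}$, which only just fails to be $L^2$-integrable in $s$; the gap is closed by the assumption $p > 2$ through Hölder, both to make the defining Bochner integral well-posed and to produce a dominating function in the continuity step. Losing $p > 2$ in favor of $p = 2$ would put the argument at a borderline that cannot be resolved with the semigroup tools recorded in Section \ref{sect semigr}.
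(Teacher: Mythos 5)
Your proof is correct and follows essentially the same route as the paper: decompose $u$ into the semigroup term plus the convolution integral, use the factorization $\nabla\cdot e^{\tau A}=\nabla(I-A)^{-1/2}(I-A)^{1/2}e^{\tau A}$ to get the $(t-s)^{-1/2}$ bound, and exploit $p>2$ to integrate the singularity. You simply spell out the Hölder and dominated-convergence details that the paper's one-line argument leaves implicit.
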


\begin{proof}
The product $ub$ is in $L^{p}\left(  0,T;L^{2}(\mathbb{R}^{d})\right)  $.
Using the bound
\[
||\nabla\cdot e^{(t-s)A}||_{L^{2}\rightarrow L^{2}}=||\nabla
\cdot(I-A)^{-1/2}(I-A)^{1/2}e^{(t-s)A}||_{L^{2}\rightarrow L^{2}}%
\leq\frac{C}{\left(  t-s\right)  ^{1/2}}%
\]
we deduce that $t\mapsto\int_{0}^{t}\nabla\cdot e^{(t-s)A}(u_{s}b_{s})ds$
is of class $C([0,T],L^{2}(\mathbb{R}^{d}))$;\ the same is true for $t\mapsto
e^{tA}u_{0}$ because $u\in L^{2}(\mathbb{R}^{d})$ as a byproduct of our
assumptions. Hence $u\in C([0,T],L^{2}(\mathbb{R}^{d}))$.
\end{proof}

\subsection{Preliminary results}

\begin{lem}
\label{lem:1} For every $\phi\in C^{2}([0,T]\times\mathbb{R}^{d})$, $S_{t}%
^{N}$ satisfies the following identity:
\begin{align*}
\left\langle S_{t}^{N},\phi_{t}\right\rangle -\left\langle S_{0}^{N},\phi
_{0}\right\rangle  &  =\int_{0}^{t}\left\langle S_{s}^{N},\frac{\partial
\phi_{s}}{\partial s}\right\rangle ds+\frac{\sigma^2}{2}\int_{0}^{t}\left\langle S_{s}%
^{N},\Delta\phi_{s}\right\rangle ds+\\
&  +\int_{0}^{t}\left\langle S_{s}^{N},\nabla\phi_{s}\cdot b(u_{s}^{N}%
,m_{s}^{N})\right\rangle ds+M_{t}^{N,\phi}%
\end{align*}
where
\[
M_{t}^{N,\phi}=\frac{\sigma}{N}\sum_{i=1}^{N}\int_{0}^{t}\nabla\phi
(X_{s}^{i,N})\cdot dB_{s}^{i}.
\]
In particular, choosing $\phi\left(  \cdot\right)  =\phi_{x}\left(
\cdot\right)  =W_{N}(\cdot-x)$, for $x\in\mathbb{R}^{d}$, we get
\[
u_{t}^{N}(x)-u_{0}^{N}(x)=\frac{\sigma^2}{2}\int_{0}^{t}\Delta u_{s}^{N}(x)ds+\int_{0}%
^{t}\mathrm{div}(W_{N}\ast\left(  b(u_{s}^{N},m_{s}^{N})S_{s}^{N}\right)
)(x)ds+M_{t}^{N}\left(  x\right)
\]
where
\[
M_{t}^{N}\left(  x\right)  =\frac{\sigma}{N}\sum_{i=1}^{N}\int_{0}^{t}\nabla
W_{N}(x-X_{s}^{i,N})\cdot dB_{s}^{i}.
\]

\end{lem}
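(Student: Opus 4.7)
The plan is to derive the first identity by applying Itô's formula to $\phi_s(X_s^{i,N})$ for each particle $i$, averaging over $i = 1,\ldots,N$, and recognizing each resulting deterministic integral as a pairing against the empirical measure $S_s^N$. The second identity then follows by specializing the test function to the translated smoothing kernel $\phi_x(y) = W_N(y-x)$ and computing the pairings explicitly.

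In more detail, I would fix $\phi \in C^2([0,T]\times \mathbb{R}^d)$ and, using equation (\ref{micro_cells}) (noise coefficient $\sigma$, so $d\langle X^{i,N}\rangle_s = \sigma^2 I\,ds$), apply Itô's formula to the semimartingale $s \mapsto \phi_s(X_s^{i,N})$. This yields
\begin{align*}
\phi_t(X_t^{i,N}) - \phi_0(X_0^{i}) &= \int_0^t \partial_s\phi_s(X_s^{i,N})\,ds + \int_0^t \nabla\phi_s(X_s^{i,N}) \cdot b(u_s^N,m_s^N)(X_s^{i,N})\,ds \\
&\quad + \frac{\sigma^2}{2}\int_0^t \Delta\phi_s(X_s^{i,N})\,ds + \sigma\int_0^t \nabla\phi_s(X_s^{i,N}) \cdot dB_s^i.
\end{align*}
Boundedness and Lipschitz continuity of $b$, namely (\ref{bounded b})--(\ref{lip 2 b}), guarantee that all the integrals are well defined. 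Summing over $i$, dividing by $N$, and rewriting $\frac{1}{N}\sum_i f(X_s^{i,N}) = \langle S_s^N, f\rangle$ produces the first identity, with the martingale term organized exactly as $M_t^{N,\phi}$.

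For the second identity I would substitute the time-independent test function $\phi_x(y) = W_N(y-x)$, which is $C^2$ because $W$ is smooth, so that $\partial_s \phi_x \equiv 0$. By evenness of $W$, $\langle S_s^N,\phi_x\rangle = \frac{1}{N}\sum_i W_N(X_s^{i,N}-x) = (W_N \ast S_s^N)(x) = u_s^N(x)$; commuting $\Delta$ and $\nabla$ with the finite sum defining $S_s^N$ then gives $\langle S_s^N,\Delta\phi_x\rangle = \Delta u_s^N(x)$ and
\[
\langle S_s^N, \nabla\phi_x\cdot b(u_s^N,m_s^N)\rangle = \mathrm{div}\bigl(W_N \ast (b(u_s^N,m_s^N) S_s^N)\bigr)(x),
\]
the latter by writing out the empirical sum and identifying the derivative of $W_N(\cdot - x)$ with the $x$-gradient of the convolution. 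Reading off the stochastic integral contribution as $M_t^N(x)$ completes the argument. The whole proof is a routine application of Itô's formula and of basic convolution identities; the only subtle point is the sign bookkeeping when passing gradients between the $y$- and $x$-variables in $W_N(y-x)$, where the evenness of $W$ is what identifies expressions that would otherwise differ by a sign.
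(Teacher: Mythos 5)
Your argument is exactly the paper's one-line proof spelled out: Itô's formula applied to $s\mapsto\phi_s(X_s^{i,N})$ for each particle, averaging over $i$ to recognize pairings with $S_s^N$, then substituting the kernel test function $W_N(\cdot-x)$ and identifying the convolution/divergence terms (the paper's ``Gauss--Green'' step), so the proposal is correct and follows the same route. One small caveat on your sign remark: evenness of $W$ makes $\nabla W_N$ odd, so the honest computation yields the drift term $-\mathrm{div}\bigl(W_N\ast(b(u_s^N,m_s^N)S_s^N)\bigr)(x)$ and a martingale written with $\nabla W_N(X_s^{i,N}-x)$ --- a sign (consistent with the limit equation $\partial_t u=\Delta u-\mathrm{div}(u\,b)$) that the paper's statement silently absorbs and that is immaterial for all subsequent estimates.
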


\begin{proof}
The proof follows by It\^{o} formula and Gauss Green formula.
\end{proof}

Concerning the family of mollifiers, we have the following useful properties,
whose proof is an elementary computation, see for instance
\cite{FlandoliLeimbachOlivera}.

\begin{lem}
\label{lem:2} Recall that $W_{N}(x)=N^{\beta}W(N^{\beta/d}x)$. Then%
\[
||W_{N}||_{L^{2}}^{2}\leq CN^{\beta}%
\]%
\[
||W_{N}||_{W^{\gamma,2}}^{2}\leq CN^{\gamma^{\ast}}%
\]
with $\gamma^{\ast}=\frac{\beta}{d}(2\gamma+d)$.
\end{lem}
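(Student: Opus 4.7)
The plan is to reduce both estimates to a change of variables: setting $y = N^{\beta/d} x$ sends the rescaled kernel $W_N$ back to the fixed kernel $W$, and the exponents in the statement should fall out mechanically once one tracks how derivatives of integer or fractional order behave under this dilation.

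First I would dispatch the $L^2$ bound. Writing
\[
\|W_N\|_{L^2}^2 = \int_{\mathbb{R}^d} N^{2\beta} |W(N^{\beta/d} x)|^2\,dx
\]
and using the substitution $y=N^{\beta/d}x$, so that $dx = N^{-\beta}\,dy$ (since $(N^{\beta/d})^d = N^\beta$), one gets $\|W_N\|_{L^2}^2 = N^\beta \|W\|_{L^2}^2$, and $W\in L^2$ because it is smooth and compactly supported.

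Next I would handle the fractional Sobolev seminorm. The cleanest route is via the Fourier transform. Using the scaling $\widehat{W_N}(\xi) = \hat W(N^{-\beta/d}\xi)$ (the $N^\beta$ prefactor cancels with the Jacobian), I would compute
\[
\|(-\Delta)^{\gamma/2} W_N\|_{L^2}^2 = \int_{\mathbb{R}^d} |\xi|^{2\gamma} |\hat W(N^{-\beta/d}\xi)|^2\,d\xi.
\]
Substituting $\eta = N^{-\beta/d}\xi$ pulls out a factor $N^{2\beta\gamma/d}$ from $|\xi|^{2\gamma}$ and a Jacobian factor $N^\beta$, giving $N^{\beta(2\gamma/d+1)} = N^{\gamma^\ast}$ multiplied by a finite constant depending only on $W$ (finite because $W$ is Schwartz, so $|\eta|^{2\gamma}|\hat W(\eta)|^2$ is integrable). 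Combining with the $L^2$ bound and noting $\beta \le \gamma^\ast$ since $\gamma \geq 0$, both contributions to $\|W_N\|_{W^{\gamma,2}}^2$ are bounded by $C N^{\gamma^\ast}$.

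There is no real obstacle; the only point worth minor care is the equivalence between the Sobolev norm as defined via the Gagliardo seminorm and the fractional Laplacian characterization recalled in (\ref{prop1_semigroup}), which lets me use the Fourier/multiplier computation rather than estimating the Gagliardo double integral directly. If one preferred to stay with the intrinsic seminorm $[f]_{\gamma,2,\mathbb{R}^d}$, the same change of variables $y = N^{\beta/d} x$, $y' = N^{\beta/d} x'$ applied to both integration variables would reproduce the exponent $\gamma^\ast = \frac{\beta}{d}(2\gamma + d)$ after tracking the two Jacobian factors and the $|x-x'|^{-2\gamma-d}$ singularity.
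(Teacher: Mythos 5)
Your proof is correct, and it is exactly the elementary scaling computation the paper has in mind: the paper gives no proof of Lemma \ref{lem:2}, deferring to \cite{FlandoliLeimbachOlivera}, and the intended argument is precisely the change of variables $y=N^{\beta/d}x$ (equivalently the Fourier-side dilation you use, which is legitimate here since the norm equivalence (\ref{prop1_semigroup}) matches your $(-\Delta)^{\gamma/2}$ convention and $W\in C_c^\infty$ makes all constants finite). Both your Fourier route and your remark that the same substitution applied directly to the Gagliardo seminorm yields $\gamma^{\ast}=\frac{\beta}{d}(2\gamma+d)$ are sound, as is the observation $\beta\leq\gamma^{\ast}$ handling the $L^2$ part.
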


We shall use also the following tightness result.

\begin{lem}
\label{lem:3} Let $X_{1}$ and $X_{2}$ be two metric spaces with their Borel
$\sigma$-fields $\mathcal{B}_{1},\mathcal{B}_{1}$ and let $\phi:X_{1}%
\rightarrow X_{2}$ be a continuous function. Let $\mathcal{G}_{1}$ be a family
of probability measures on $\left(  X_{1},\mathcal{B}_{1}\right)  $. Denote by
$\mathcal{G}_{2}$ the family of probability measures on $\left(
X_{2},\mathcal{B}_{2}\right)  $ obtained as image laws of the measures in
$\mathcal{G}_{1}$ under the map $\phi$. If $\mathcal{G}_{1}$ is tight, then
$\mathcal{G}_{2}$ is tight.
\end{lem}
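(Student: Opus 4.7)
The plan is to reduce the tightness of $\mathcal{G}_2$ directly to the tightness of $\mathcal{G}_1$ by pushing forward compact sets through $\phi$, exploiting the two standard facts that the continuous image of a compact set is compact and that pushforward measure assigns to a target set the mass that the source measure assigns to its preimage.

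First, I would fix $\varepsilon>0$. By the assumed tightness of $\mathcal{G}_1$, there exists a compact set $K_1\subset X_1$ such that $\mu(K_1)\geq 1-\varepsilon$ for every $\mu\in\mathcal{G}_1$. Set $K_2:=\phi(K_1)\subset X_2$. Because $\phi$ is continuous and $K_1$ is compact, $K_2$ is compact in $X_2$; in particular $K_2$ is closed and hence belongs to $\mathcal{B}_2$.

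Next, I would take an arbitrary $\nu\in\mathcal{G}_2$. By definition of $\mathcal{G}_2$ there exists $\mu\in\mathcal{G}_1$ with $\nu=\phi_\ast\mu$, i.e.\ $\nu(B)=\mu(\phi^{-1}(B))$ for every $B\in\mathcal{B}_2$. Since $K_1\subset \phi^{-1}(\phi(K_1))=\phi^{-1}(K_2)$, we obtain
\[
\nu(K_2)=\mu(\phi^{-1}(K_2))\geq \mu(K_1)\geq 1-\varepsilon.
\]
As $\nu\in\mathcal{G}_2$ was arbitrary and $K_2$ does not depend on $\nu$, this shows that $\mathcal{G}_2$ is tight.

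The argument is short and there is no real obstacle: the only technical point worth double-checking is that $\phi^{-1}(K_2)$ is a legitimate measurable set on which the pushforward identity may be applied, which is immediate since $K_2$ is closed and $\phi$ is continuous, so $\phi^{-1}(K_2)\in\mathcal{B}_1$. Nothing hinges on separability or completeness of $X_1$, $X_2$, so the statement holds in the stated generality.
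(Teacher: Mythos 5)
Your proof is correct and follows essentially the same argument as the paper: push the compact set $K_{1}$ forward through $\phi$ and use the pushforward identity. In fact you are slightly more careful than the paper, which writes $\nu(K_{2}^{\epsilon})=\mu(K_{1}^{\epsilon})$ where only the inequality $\nu(K_{2})=\mu(\phi^{-1}(K_{2}))\geq\mu(K_{1})$ is justified, exactly as you state it.
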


\begin{proof}
Given $\epsilon>0$, let $K_{1}^{\epsilon}\subset X_{1}$ be a compact set such
that $\mu\left(  K_{1}^{\epsilon}\right)  >1-\epsilon$ for every $\mu
\in\mathcal{G}_{1}$. Set $K_{2}^{\epsilon}=\phi\left(  K_{1}^{\epsilon
}\right)  $; it is a compact set of $X_{2}$ and for every $\nu\in
\mathcal{G}_{2}$, called $\mu$ a measure in $\mathcal{G}_{1}$ such that $\nu$
is the image of $\mu$ under $\phi$, we have
\[
\nu\left(  K_{2}^{\epsilon}\right)  =\mu\left(  K_{1}^{\epsilon}\right)
>1-\epsilon.
\]
This proves tightness of $\mathcal{G}_{2}$.
\end{proof}
Regarding the initial condition, we state a result, that will be usefull in the proof of tightness
\begin{lem}\label{lem:initial_condition}
Assume that $X^i_0$ , $i = 1, ...,N,$ are independent identically distributed r.v
with common probability density $u_0\in C^2(\mathbb{R}^d)$, then on $u^N_0$, defined as $u^N_0(x)=(W^N\ast u_0)(x)$, we get the following uniform bounds for $p>1$:
\[\mathbb{E}\left[ \left|\left|u^N_0\right|\right|^p_{W^{\alpha,2}}\right]\leq C_{u_0,\alpha,p}\]
where $C$ is a constant depending on $p$ and $\alpha$.
\end{lem}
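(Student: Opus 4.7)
The statement is best read under the convention that $u_0^N(x) := (W_N \ast S_0^N)(x) = \frac{1}{N}\sum_{i=1}^N W_N(x - X_0^i)$, i.e.\ the smoothed empirical measure of the initial positions, so that the expectation in the conclusion is nontrivial. Under this reading, the plan is to view $u_0^N$ as an iid average in the Hilbert space $H := W^{\alpha,2}(\mathbb{R}^d)$ and to exploit independence. The naive pathwise bound from the triangle inequality only gives $\|u_0^N\|_H \leq \|W_N\|_H$, which grows like $N^{\alpha^{\ast}/2}$ by Lemma \ref{lem:2}, so the gain must come from averaging.

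I would first decompose $u_0^N = \bar{u}_0^N + \tilde{u}_0^N$, with deterministic mean $\bar{u}_0^N := \mathbb{E}[u_0^N] = W_N \ast u_0$ and centered fluctuation $\tilde{u}_0^N = \frac{1}{N}\sum_i Y_i$, where $Y_i := W_N(\cdot - X_0^i) - W_N \ast u_0$ are iid centered $H$-valued random variables. The mean part is immediate from Young's convolution inequality: $\|\bar{u}_0^N\|_H \leq \|W_N\|_{L^1}\|u_0\|_H = \|u_0\|_H$, which is finite since $u_0$ is smooth of compact support.

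For the fluctuation, the second-moment case is elementary: by translation invariance of $\|\cdot\|_H$ and orthogonality of the cross-terms in $H$,
\[
\mathbb{E}\bigl[\|\tilde{u}_0^N\|_H^2\bigr] = \frac{1}{N}\mathbb{E}\bigl[\|Y_1\|_H^2\bigr] \leq \frac{2}{N}\bigl(\|W_N\|_H^2 + \|u_0\|_H^2\bigr),
\]
and Lemma \ref{lem:2} bounds $N^{-1}\|W_N\|_H^2 \leq C N^{\alpha^{\ast}-1}$, which is $O(1)$ under the implicit constraint $\alpha^{\ast} \leq 1$, i.e.\ $\alpha \leq d(1-\beta)/(2\beta)$. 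For $p > 2$ I would invoke the Hilbert-space Rosenthal inequality for iid centered sums,
\[
\mathbb{E}\Bigl[\Bigl\|\sum_{i=1}^N Y_i\Bigr\|_H^p\Bigr] \leq C_p\bigl(N^{p/2}(\mathbb{E}\|Y_1\|_H^2)^{p/2} + N\,\mathbb{E}\|Y_1\|_H^p\bigr),
\]
which, combined with the deterministic bound $\|Y_1\|_H \leq \|W_N\|_H + \|u_0\|_H$, yields $\mathbb{E}[\|\tilde{u}_0^N\|_H^p] \leq C_p\bigl(N^{p(\alpha^{\ast}-1)/2} + N^{1 - p(2-\alpha^{\ast})/2}\bigr)$, both terms uniformly bounded in $N$ under the same constraint (for $p \geq 2$). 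The range $1 < p < 2$ reduces to $p = 2$ by Jensen.

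The main obstacle is really just picking a sufficiently clean form of the Hilbert-space moment inequality to cover all $p > 2$; an appealing alternative is to pass through the Fourier side, writing $\|f\|_{W^{\alpha,2}}^2 \sim \int (1+|\xi|^{2\alpha})|\hat f(\xi)|^2\, d\xi$ with $\hat{u}_0^N(\xi) = \hat{W}_N(\xi) \cdot \frac{1}{N}\sum_i e^{-i\xi \cdot X_0^i}$, and to apply scalar Marcinkiewicz--Zygmund moment bounds to the random trigonometric sum pointwise in $\xi$, reducing everything to an elementary integration against $(1+|\xi|^{2\alpha})|\hat{W}_N(\xi)|^2$ and thus directly to Lemma \ref{lem:2}.
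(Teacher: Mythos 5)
Your proof is correct, and your reading of the statement (taking $u_0^N=W_N\ast S_0^N$, the mollified empirical measure, rather than $W_N\ast u_0$ as literally written) is the one consistent with how the lemma is proved and used in the paper. Your route is, however, genuinely different from the paper's. The paper argues pointwise in $x$: writing $u_0^N(x)=\frac1N\sum_i Y^i(x)$ and $(-\Delta)^{\alpha}u_0^N(x)=\frac1N\sum_i\widetilde Y^i(x)$, it bounds $\mathbb{E}\bigl[\bigl|\frac1N\sum_i Y^i(x)\bigr|^p\bigr]$ by an exponential Chebyshev argument (layer-cake formula, $P(Z>t^{1/p})\le e^{-t^{1/p}}\mathbb{E}[e^{Z}]$, $\log(1+x)\le x$, $e^x-1\le xe^x$), exploiting $Y^i,\widetilde Y^i\ge 0$, boundedness of $Y/N=N^{\beta-1}W(N^{\beta/d}\cdot)$, and boundedness of $\widetilde Y/N\le CN^{-1+\beta+2\alpha\beta/d}$ for $\alpha$ small — which is exactly the threshold $\alpha^{\ast}\le 1$ you isolate — and then integrates these pointwise bounds over fixed compact sets, relying on compact support of $u_0$, of $W_N$, and on the asserted support-preservation by $(-\Delta)^{\alpha}$. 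You instead stay at the level of the $W^{\alpha,2}$ norm: mean part by Young's inequality, centered fluctuation by Hilbert-space orthogonality at $p=2$, vector-valued Rosenthal for $p>2$, Jensen for $1<p<2$. What your approach buys is that you never need pointwise positivity of $\widetilde Y^i$ nor any locality/support property of the fractional Laplacian — which is a nonlocal operator and does not literally preserve compact supports, so the paper's reduction of the $(-\Delta)^{\alpha}u_0^N$ term to a fixed ball is the most delicate point of its argument — and the smallness condition on $\alpha$ appears explicitly and coincides with the one already imposed in Lemma \ref{lem:4}. What you pay is the appeal to a Hilbert-space moment inequality, standard but heavier machinery than the paper's purely scalar, elementary manipulations; your Fourier alternative removes even that, reducing everything to scalar Marcinkiewicz--Zygmund bounds integrated against $(1+|\xi|^{2\alpha})|\hat W_N(\xi)|^2$ and hence directly to Lemma \ref{lem:2}.
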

\begin{proof}
By the definition of the norm in the fractional Sobolev space, we need to estimate uniformly in N:
\begin{equation}
\label{eq:u0}
\mathbb{E}\left[ \left|\left|u^N_0\right|\right|^p_{W^{\alpha,2}}\right]=\mathbb{E}\left[  \left\Vert u_{0}^{N}\right\Vert _{L^{2}\left(
\mathbb{R}^{d}\right)  }^{p}\right]  +\mathbb{E}\left[  \left\Vert \left(
-\Delta\right)  ^{\alpha}u_{0}^{N}\right\Vert _{L^{2}\left(  \mathbb{R}%
^{d}\right)  }^{p}\right]  .
\end{equation}

We recall that $u_0$ is compactly supported and moreover fractional laplacian is a local operator, namely it preserves compactness properties of functions.
Then, for $p\geq2$,%
\[
\left\Vert u_{0}^{N}\right\Vert _{L^{2}\left(  \mathbb{R}^{d}\right)  }%
^{p}\leq\int_{\mathcal{B}_1}\left\vert u_{0}^{N}\left(  x\right)  \right\vert ^{p}dx
\]
\[
\left\Vert \left(  -\Delta\right)  ^{\alpha}u_{0}^{N}\right\Vert
_{L^{2}\left(  \mathbb{R}^{d}\right)  }^{p}\leq 
\int_{\mathcal{B}_1}\left\vert \left(  -\Delta\right)
^{\alpha}u_{0}^{N}\left(  x\right)  \right\vert ^{p}dx
\]
where $\mathcal{B}_1$, $\mathcal{B}_2$ are respectively compact supports of $u^N_0$ and $\left(  -\Delta\right)
^{\alpha}u_{0}^{N}$ Assuming that
\[
Y^{i}=Y^{i}\left(  x\right)  =W_{N}\left(  x-X_{0}^{i}\right),
\]
\[
\widetilde{Y}^{i}=\widetilde{Y}^{i}\left(  x\right)  =\left(  -\Delta\right)
^{\alpha}W_{N}\left(  x-X_{0}^{i}\right),
\]
we can write the estimates for \eqref{eq:u0} in the following terms:
\[
\leq \mathbb{E}\left[  \int_{\mathcal{B}_1}\left\vert \frac{1}%
{N}\sum_{i=1}^{N}Y^{i}\left(  x\right)  \right\vert ^{p}dx\right]
+\mathbb{E}\left[  \int_{\mathcal{B}_2}\left\vert \frac{1}%
{N}\sum_{i=1}^{N}\widetilde{Y}^{i}\left(  x\right)  \right\vert ^{p}dx\right]
\]
Then we need to estimate
\begin{equation*}
\mathbb{E}\left[  \left\vert \frac{1}{N}\sum_{i=1}^{N}Y^{i}\left(  x\right)
\right\vert ^{p}\right]  +\mathbb{E}\left[  \left\vert \frac{1}{N}\sum
_{i=1}^{N}\widetilde{Y}^{i}\left(  x\right)  \right\vert ^{p}\right].\label{prove}%
\end{equation*}
Being $Y^i\geq 0$ on the first summand we have:
\begin{multline*}
\mathbb{E}\left[  \left\vert \frac{1}{N}\sum_{i=1}^{N}Y^{i}\right\vert
^{p}\right]     =\int_{0}^{\infty}P\left(  \left(  \frac{1}{N}\sum_{i=1}%
^{N}Y^{i}\right)  ^{p}>t\right)  dt\\
=\int_{0}^{\infty}P\left(  \frac{1}{N}%
\sum_{i=1}^{N}Y^{i}>t^{1/p}\right)  dt\\
  =\int_{0}^{\infty}P\left(  \exp\left(  \frac{1}{N}\sum_{i=1}^{N}%
Y^{i}\right)  >\exp\left(  t^{1/p}\right)  \right)  dt\\
 \leq\int_{0}^{\infty}\exp\left(  -t^{1/p}\right)  \mathbb{E}\left[
e^{\frac{1}{N}\sum_{i=1}^{N}Y^{i}}\right]  dt\\
  =e^{N\log\mathbb{E}\left[  e^{\frac{Y}{N}}\right]  }\int_{0}^{\infty}%
\exp\left(  -t^{1/p}\right)  dt
\end{multline*}
where $Y$ has the same law of $Y^i$. Notice that the equality 
\[\mathbb{E}\left[e^{\frac{1}{N}\sum_{i=1}^{N}Y^{i}}\right]=e^{N\log\mathbb{E}\left[  e^{\frac{Y}{N}}\right] }\]
follows easily from the fact that $Y^i$ are iid.
Because also $\tilde{Y}^i\geq 0$, the same result holds for the second term. Then
\begin{multline*}
\mathbb{E}\left[  \left\vert \frac{1}{N}\sum_{i=1}^{N}Y^{i}\left(  x\right)
\right\vert ^{p}\right]  +\mathbb{E}\left[  \left\vert \frac{1}{N}\sum
_{i=1}^{N}\widetilde{Y}^{i}\left(  x\right)  \right\vert ^{p}\right]\leq\\
\left(e^{N\log\mathbb{E}\left[  e^{\frac{Y}{N}}\right]  }+e^{N\log\mathbb{E}\left[  e^{\frac{\tilde{Y}}{N}}\right]  }\right)\int_{0}^{\infty}%
\exp\left(  -t^{1/p}\right)  dt
\end{multline*}

Let us estimate the first term (the same result will hold for the second term).
We recall some basics inequalities $\log\left(  1+x\right)
\leq x$, $e^{x}-1\leq xe^{x}$ for $x\geq0$. Then
\begin{align*}
\log\mathbb{E}\left[  e^{\frac{Y\left(  x\right)  }{N}}\right]    &
=\log\left(  1+\mathbb{E}\left[  e^{\frac{Y\left(  x\right)  }{N}}-1\right]
\right)  \\
& \leq\mathbb{E}\left[  e^{\frac{Y\left(  x\right)  }{N}}-1\right]  \\
& \leq\mathbb{E}\left[  \frac{Y\left(  x\right)  }{N}e^{\frac{Y\left(
x\right)  }{N}}\right]
\end{align*}
We have to estimate:
\[\mathbb{E}\left[  Y\left(  x\right)e^{\frac{Y\left(
x\right)  }{N}}\right]\quad\textrm{and}\quad \mathbb{E}\left[  \tilde{Y}\left(  x\right) e^{\frac{\tilde{Y}\left(
x\right)  }{N}}\right]\]
Recalling the definition of $Y^i$,
\[
\frac{Y\left(  x\right)  }{N}=N^{-1}W_{N}\left(  x-X_{0}\right)  =N^{\beta
-1}W\left(  N^{\beta/d}\left(  x-X_{0}\right)  \right)  \leq C.
\]
being $W$ bounded, we get that $\frac{Y(x)}{N}$ is bounded. Now we just need to estimate $\mathbb{E}\left[Y(x)\right]$.
\[\mathbb{E}\left[Y(x)\right]=\left(W^N\ast u_0\right)(x)\]
the last term is bounded because $u_0$ is it. Let us analyze the second term, which is a bit more delicate. By the definition of $\tilde{Y}^i$,
\[
\frac{\widetilde{Y}\left(  x\right)  }{N}=N^{-1}\left(  -\Delta\right)
^{\alpha}W_{N}\left(  x-X_{0}\right)  \leq CN^{-1+\beta}N^{2\alpha\beta/d}.
\]
Choosing an $\alpha$ small enough the term $\frac{\widetilde{Y}\left(  x\right)  }{N}$ is bounded. 
At the end we need to prove a uniform estimate on $\mathbb{E}\left[\tilde{Y}(x)\right]$.
\begin{align*}
\mathbb{E}\left[  \widetilde{Y}\left(  x\right)  \right]    & =\mathbb{E}%
\left[  \left(  -\Delta\right)  ^{\alpha}W_{N}\left(  x-X_{0}\right)  \right]
=\int\left[  \left(  -\Delta\right)  ^{\alpha}W_{N}\right]  \left(
x-x_{0}\right)  u_{0}\left(  x_{0}\right)  dx_{0}\\
& \overset{x_{0}^{\prime}=x-x_{0}}{=}-\int\left(  -\Delta\right)  ^{\alpha
}W_{N}\left(  x_{0}^{\prime}\right)  u_{0}\left(  x-x_{0}^{\prime}\right)
dx_{0}^{\prime}\\
& =-\left\langle \left(  -\Delta\right)  ^{\alpha}W_{N},u_{0}\left(
x-\cdot\right)  \right\rangle _{L^{2}}\\
& =-\left\langle W_{N},\left(  -\Delta\right)  ^{\alpha}u_{0}\left(
x-\cdot\right)  \right\rangle _{L^{2}}\\
& =-\int W_{N}\left(  x_{0}^{\prime}\right)  \left[  \left(  -\Delta\right)
^{\alpha}u_{0}\right]  \left(  x-x_{0}^{\prime}\right)  dx_{0}^{\prime}\\
& =\left[  W_{N}\ast\left(  \left(  -\Delta\right)  ^{\alpha}u_{0}\right)
\right]  \left(  x\right)  .
\end{align*}
Being $\left(  -\Delta\right)  ^{\alpha}u_{0}$ compactly supported and continuous also $W_{N}\ast\left(\left(  -\Delta\right)  ^{\alpha}u_{0}\right)  $ is uniformly bounded. In summary, 
\[\mathbb{E}\left[ \left|\left|u^N_0\right|\right|^p_{W^{\alpha,2}}\right]\leq C_{\mathcal{B}_1,\mathcal{B}_2,u_0,\alpha,p}.\]
\end{proof}

\section{Tightness}

\subsection{Compactness of function spaces\label{sect comp}}

We use Corollary 9 of J. Simon \cite{Simon}, using as far as possible the
notations of that paper, for easiness of reference. Given a ball
$B_{R}:=B\left(  0,R\right)  $ in $\mathbb{R}^{d}$, taken $\alpha>\epsilon>0$,
consider the spaces%
\[
X=W^{\alpha,2}\left(  B_{R}\right)  ,\qquad B=W^{\alpha-\epsilon,2}\left(
B_{R}\right)  ,\qquad Y=W^{-2,2}\left(  B_{R}\right)  .
\]
We have%
\[
X\subset B\subset Y
\]
with compact dense embeddings. Moreover, we have the interpolation inequality (see Theorem 6.4.5 in \cite{BerLof}) 
\[
\left\Vert f\right\Vert _{B}\leq C_{R}\left\Vert f\right\Vert _{X}^{1-\theta
}\left\Vert f\right\Vert _{Y}^{\theta}%
\]
for all $f\in X$, with
\[
\theta=\frac{\epsilon}{2+\alpha}.
\]
These are preliminary assumptions of Corollary 9 of \cite{Simon}. The
Corollary tells us that the embedding of
\[
\mathcal{W}_{R}:=L^{r_{0}}\left(  0,T;X\right)  \cap W^{s_{1},r_{1}}\left(
0,T;Y\right)
\]
is relatively compact in $C\left(  \left[  0,T\right]  ;B\right)  $, if
$s_{1}r_{1}>1$ and $r_{0}$ is so large that $s_{\theta}>\frac{1}{r_{\theta}}$
where (always following the notations of \cite{Simon}) $s_{\theta}=\theta
s_{1}$, $\frac{1}{r_{\theta}}=\frac{1-\theta}{r_{0}}+\frac{\theta}{r_{1}}$.
Below we shall choose for instance $s_{1}=\frac{1}{3}$ (any number smaller
than $\frac{1}{2}$) and $r_{1}=4$, so $s_{1}r_{1}>1$ is fulfilled. Then we
need%
\[
\frac{\theta}{3}>\frac{1-\theta}{r_{0}}+\frac{\theta}{4}.
\]
The logical sequence of our choices is: given $\beta\in\left(  0,1\right)  $
(think to $\beta$ close to 1, which is the most difficult choice), we shall
choose $\alpha>0$ so small to satisfy a condition related to $\beta$ which
appears in the proof of Lemma \ref{lem:4}\ below (when $\beta$ is close to 1,
we have to choose $\alpha$ small). Given this small $\alpha$, we choose
$\epsilon\in\left(  0,\alpha\right)  $ and then $\theta=\frac{\epsilon
}{2+\alpha}$ is determined, typically very small. Now, we choose $r_{0}$ so
large that $\frac{\theta}{3}>\frac{1-\theta}{r_{0}}+\frac{\theta}{4}$.
Summarising we choose $(\alpha,s_1,r_1,r_0,\epsilon)$, in the following way:
\begin{equation*}
   \begin{cases}
\alpha: \textrm{ determined by } \beta\\
(s_1,r_1): \textrm{determined (almost) a priori. See in Proposition \ref{bound_tightness} condition } \\
\quad \quad\quad\quad\quad\quad\quad\quad\quad\quad\quad\quad\quad\quad\quad\quad\quad\quad\quad\quad\quad\quad\quad\quad\quad\quad\quad\quad s_1r_1-\frac{r_1}{2}<0\\
r_0: \textrm{large enough s.t. }\theta s_1> \frac{1-\theta}{r_0}+\frac{\theta}{r_1}\\
\epsilon: \epsilon<\alpha \textrm{ arbitrarily small}
   \end{cases}
\end{equation*}
The final step consists in taking $\mathbb{R}^{d}$ instead of $B_{R}$. We
denote by $W_{loc}^{\alpha,2}\left(  \mathbb{R}^{d}\right)  $ the space of
functions $f\in\cap_{R>0}W^{\alpha,2}\left(  B_{R}\right)  $ and we endow this
space with the metric%
\[
d_{W_{loc}^{\alpha,2}}\left(  f,g\right)  =\sum_{n=1}^{\infty}2^{-n}\left(
\left\Vert f-g\right\Vert _{W^{\alpha,2}\left(  B_{n}\right)  }\wedge1\right)
.
\]
Under the same conditions on the indexes, we have now that
\[
\mathcal{W}:=L^{r_{0}}\left(  0,T;W^{\alpha,2}\left(  \mathbb{R}^{d}\right)
\right)  \cap W^{s_{1},r_{1}}\left(  0,T;W^{-2,2}\left(  \mathbb{R}%
^{d}\right)  \right)
\]
is compactly embedded into $C\left(  \left[  0,T\right]  ;W_{loc}%
^{\alpha-\epsilon,2}\left(  \mathbb{R}^{d}\right)  \right)  $.

\subsection{Main estimate on the empirical density $u^{N}$}
Before looking into details for the derivation of main estimates for the empirical density, we state the mild formulation for $u_{t}^{N}$, see Lemma \ref{lem:1} for the identity for $u_{t}^{N}$:
\[
u_{t}^{N}=e^{tA}u_{0}^{N}+\int_{0}^{t}e^{(t-s)A}\mathrm{div}(W_{N}\ast\left(  b(u_{s}^{N},m_{s}^{N}%
)S_{s}^{N}\right)  )(x)ds+\int%
_{0}^{t}e^{(t-s)A}dM_{s}^{N}%
\]
\begin{lem}
\label{lem:4}Given $\beta\in\left(  0,1\right)  $, there exists $\alpha>0$
small enough such that the following holds:\ for every $p>1$ there is a
constant $C_{p}>0$ such that%
\[
\sup_{t\in\left[  0,T\right]  }\mathbb{E}\left[  \left\Vert u_{t}%
^{N}\right\Vert _{W^{\alpha,2}(\mathbb{R}^{d})}^{p}\right]  \leq C_{p}%
\]
independently of $N$.
\end{lem}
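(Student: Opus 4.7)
The natural starting point is the mild formulation of $u^{N}$ recalled just above the statement,
\[
u_{t}^{N}=e^{tA}u_{0}^{N}+\int_{0}^{t}e^{(t-s)A}\,\mathrm{div}(F_{s}^{N})\,ds+\int_{0}^{t}e^{(t-s)A}\,dM_{s}^{N},\qquad F_{s}^{N}:=W_{N}\ast\bigl(b(u_{s}^{N},m_{s}^{N})S_{s}^{N}\bigr).
\]
I would apply $(I-A)^{\alpha/2}$, take the $L^{2}$-norm, the $p$-th power and the expectation, and estimate the three summands separately. Setting $f(t):=\mathbb{E}[\|u_{t}^{N}\|_{W^{\alpha,2}}^{p}]^{1/p}$, the aim is to derive an inequality of the form $f(t)\leq C_{p}+C_{p}\int_{0}^{t}(t-s)^{-\gamma}f(s)\,ds$ with $\gamma<1$ and with constants independent of $N$, then close via the generalized (Henry) Gronwall inequality for weakly singular kernels.

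\textbf{Initial condition and drift term.} For the first summand, $e^{tA}$ is a contraction on $W^{\alpha,2}(\mathbb{R}^{d})$ because it commutes with $(I-A)^{\alpha/2}$, so $\|e^{tA}u_{0}^{N}\|_{W^{\alpha,2}}\leq\|u_{0}^{N}\|_{W^{\alpha,2}}$, and Lemma~\ref{lem:initial_condition} directly yields the uniform bound $\mathbb{E}[\|u_{0}^{N}\|_{W^{\alpha,2}}^{p}]\leq C_{p}$. For the drift, the key observation is the pointwise comparison $|F_{s}^{N}(x)|\leq C'\,u_{s}^{N}(x)$ (immediate from $|b|\leq C'$, see \eqref{bounded b}), giving $\|F_{s}^{N}\|_{L^{2}}\leq C'\|u_{s}^{N}\|_{L^{2}}\leq C'f(s)$. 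Commuting $(I-A)^{\alpha/2}e^{(t-s)A}$ with $\mathrm{div}$ and combining \eqref{prop3_semigroup} with \eqref{prop2_semigroup} leads to
\[
\|(I-A)^{\alpha/2}e^{(t-s)A}\,\mathrm{div}(F_{s}^{N})\|_{L^{2}}\leq\frac{C}{(t-s)^{(\alpha+1)/2}}\|u_{s}^{N}\|_{L^{2}},
\]
so after Minkowski in time this summand contributes the weakly singular integral $C\int_{0}^{t}(t-s)^{-(\alpha+1)/2}f(s)\,ds$, integrable since $\alpha<1$.

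\textbf{Martingale term (the main obstacle).} This is where the relation between $\alpha$ and $\beta$ is forced. By It\^o's isometry (for $p=2$, with the Da Prato--Kwapien--Zabczyk factorization method to pass to general $p>2$), the control reduces to
\[
\frac{\sigma^{2}}{N^{2}}\sum_{i=1}^{N}\mathbb{E}\!\left[\int_{0}^{t}\bigl\|(I-A)^{\alpha/2}e^{(t-s)A}\nabla W_{N}(\cdot-X_{s}^{i,N})\bigr\|_{L^{2}}^{2}\,ds\right].
\]
Since the semigroup, $\nabla$ and fractional powers all commute with translations, I would introduce an auxiliary exponent $\delta\in(0,1)$ and redistribute derivatives: moving a factor $(I-A)^{(1-\delta)/2}\nabla$ onto $W_{N}$ while leaving $(I-A)^{(\alpha+\delta)/2}e^{(t-s)A}$ on the semigroup side produces a time singularity $(t-s)^{-(\alpha+\delta)}$ together with the factor $\|W_{N}\|_{W^{1-\delta,2}}^{2}\leq CN^{\beta(2(1-\delta)+d)/d}$ coming from Lemma~\ref{lem:2}. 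The resulting bound takes the form $C\,T^{1-(\alpha+\delta)}\,N^{\beta(2-2\delta+d)/d-1}$. Uniformity in $N$ requires $\beta(2-2\delta+d)\leq d$, i.e. $\delta\geq 1-\frac{d(1-\beta)}{2\beta}$; time-integrability additionally requires $\alpha+\delta<1$. Both conditions are compatible precisely when
\[
\alpha<\frac{d(1-\beta)}{2\beta},
\]
which is the smallness of $\alpha$ (depending on $\beta$) announced in the statement.

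\textbf{Conclusion.} Collecting the three estimates yields $f(t)\leq C_{p}+C_{p}\int_{0}^{t}(t-s)^{-(\alpha+1)/2}f(s)\,ds$ with $N$-independent constants. Since at each fixed $N$ the function $f$ is a priori finite (the empirical density being a finite sum of $W^{\alpha,2}$-functions), Henry's Gronwall inequality for weakly singular kernels delivers $\sup_{t\in[0,T]}f(t)\leq C'_{p}$, which is the claim.
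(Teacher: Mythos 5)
Your proposal is correct and follows essentially the same route as the paper: the mild formulation with $(I-A)^{\alpha/2}$, the pointwise bound $|W_N\ast(b\,S^N)|\leq C u^N$ for the drift, translation invariance plus a redistribution of fractional derivatives between the semigroup and $W_N$ (your condition $\alpha<\tfrac{d(1-\beta)}{2\beta}$ is exactly the paper's $(\alpha+\epsilon)^{\ast}\leq 1$), and a singular-kernel Gronwall closure. The only deviations are cosmetic: you run a single Gronwall directly in $W^{\alpha,2}$ (with the needed a priori finiteness remark) where the paper first closes the estimate in $L^{2}$ and then bootstraps to $W^{\alpha,2}$, and you invoke It\^o isometry plus factorization for $p>2$ where the paper uses the Hilbert-space Burkholder-type inequality.
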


\begin{proof}
\textbf{Step 1} (preliminary estimates). We shall use the equivalence between
norms (\ref{prop1_semigroup}):
\[
||(I-A)^{\alpha/2}f||_{L^{2}(\mathbb{R}^{d})}\sim||f||_{W^{\alpha
,2}(\mathbb{R}^{d})}.
\]
Then, up to a constant, denoting with $f_{s}^{N}(x)=\mathrm{div}(W_{N}\ast\left(  b(u_{s}^{N},m_{s}^{N}%
)S_{s}^{N}\right)  )(x)$
\begin{align*}
||u_{t}^{N}||_{W^{\alpha,2}(\mathbb{R}^{d})}  &  \leq||(I-A)^{\alpha
/2}e^{tA}u_{0}^{N}||_{L^{2}(\mathbb{R}^{d})}+||(I-A)^{\alpha/2}%
\int_{0}^{t}e^{(t-s)A}f_{s}^{N}ds||_{L^{2}(\mathbb{R}^{d})}\\
&  +||(I-A)^{\alpha/2}\int_{0}^{t}e^{(t-s)A}dM_{s}^{N}%
||_{L^{2}(\mathbb{R}^{d})}.
\end{align*}
On the first term, using (\ref{prop2_semigroup}) we prove the following estimate
\begin{align*}
&  \mathbb{E}\left[  ||(I-A)^{\alpha/2}e^{tA}u_{0}^{N}||_{L^{2}%
(\mathbb{R}^{d})}^{p}\right] \\
&  \leq||e^{tA}||_{L^{2}\rightarrow L^{2}}^{p}\mathbb{E}\left[
||(I-A)^{\alpha/2}u_{0}^{N}||_{L^{2}(\mathbb{R}^{d})}^{p}\right]  \leq C\mathbb{E}\left[
\left\Vert u_{0}^{N}\right\Vert _{W^{\alpha,2}(\mathbb{R}^{d})}^{p}\right]  .
\end{align*}
The last expected value is bounded by the assumption that $u_{0}$ is $C^{1}$
compact support: in this case one can show convergence of the empirical means
of the i.i.d. r.v.'s $X_{0}^{i}$ , that imply a uniform in $N$ bound on
$\mathbb{E}\left[  \left\Vert u_{0}^{N}\right\Vert _{W^{\alpha,2}%
(\mathbb{R}^{d})}^{p}\right]  $, for every $p$ (see
\cite{FlandoliLeimbachOlivera} for similar results).

On the third term, we use the following fact. For every $p>1$ there is a
constant $C_{p}>0$ such that, if $\Phi_{t}^{1},...,\Phi_{t}^{N}$ are adapted
square integrable processes with values in a Hilbert space $H$,
\[
\mathbb{E}\left[  \left\Vert \sum_{i=1}^{N}\int_{0}^{T}\Phi_{t}^{i}dB_{t}%
^{i}\right\Vert _{H}^{p}\right]  \leq C_{p}\mathbb{E}\left[  \left(
\sum_{i=1}^{N}\int_{0}^{T}\left\Vert \Phi_{t}^{i}\right\Vert _{H}%
^{2}dt\right)  ^{p/2}\right]  .
\]
Therefore%
\begin{align*}
&  \mathbb{E}\left[  \left\Vert \int_{0}^{t}(I-A)^{\alpha/2}%
e^{(t-s)A}dM_{s}^{N}\right\Vert _{L^{2}(\mathbb{R}^{d})}^{p}\right] \\
&  =\mathbb{E}\left[  \left\Vert \frac{\sigma}{N}\sum_{i=1}^{N}\int_{0}%
^{t}(I-A)^{\alpha/2}e^{(t-s)A}\nabla W_{N}(\cdot-X_{s}^{i,N}%
)dB_{s}^{i}\right\Vert _{L^{2}(\mathbb{R}^{d})}^{p}\right] \\
&  \leq C_{p}\mathbb{E}\left[  \left(  \frac{\sigma^2}{N^{2}}\sum_{i=1}^{N}\int%
_{0}^{T}\left\Vert (I-A)^{\alpha/2}e^{(t-s)A}\nabla W_{N}%
(\cdot-X_{s}^{i,N})\right\Vert _{L^{2}(\mathbb{R}^{d})}^{2}ds\right)
^{p/2}\right] \\
&  =C_{p}\mathbb{E}\left[  \left(  \frac{\sigma^2}{N^{2}}\sum_{i=1}^{N}\int_{0}%
^{T}\left\Vert (I-A)^{\alpha/2}e^{(t-s)A}\nabla W_{N}\right\Vert
_{L^{2}(\mathbb{R}^{d})}^{2}ds\right)  ^{p/2}\right] \\
&  =C_{p}\left(  \frac{\sigma^2}{N}\int_{0}^{T}\left\Vert (I-A)^{\alpha
/2}e^{(t-s)A}\nabla W_{N}\right\Vert _{L^{2}(\mathbb{R}^{d})}%
^{2}ds\right)  ^{p/2}.
\end{align*}
Moreover, the gradient commutes with the heat semigroup and the fractional
powers of the Laplacian. Hence, using (\ref{prop2_semigroup}) and (\ref{prop3_semigroup}), the
integrand can be estimated as follows%
\begin{align*}
&  \left\Vert (I-A)^{\alpha/2}e^{(t-s)A}\nabla W_{N}\right\Vert
_{L^{2}(\mathbb{R}^{d})}^{2}\\
&  \leq\left[  ||\nabla(I-A)^{-1/2}||_{L^{2}\rightarrow L^{2}}%
||(I-A)^{\frac{1-\epsilon}{2}}e^{(t-s)A}||_{L^{2}\rightarrow L^{2}%
}||(I-A)^{\frac{\alpha+\epsilon}{2}}W_{N}||_{L^{2}(\mathbb{R}^{d}%
)}\right]  ^{2}\\
&  \leq\frac{c}{(t-s)^{1-\epsilon}}||W_{N}||_{W^{\alpha+\epsilon,2}%
(\mathbb{R}^{d})}^{2}.
\end{align*}
From Lemma \ref{lem:2} we get%
\[
\left\Vert (I-A)^{\alpha/2}e^{(t-s)A}\nabla W_{N}\right\Vert
_{L^{2}(\mathbb{R}^{d})}^{2}\leq\frac{c}{(t-s)^{1-\epsilon}}N^{(\alpha
+\epsilon)^{\ast}}%
\]

and thus we can estimate the martingale term in the following way:%
\begin{align*}
&  \mathbb{E}\left[  \left\Vert \int_{0}^{t}(I-A)^{\alpha/2}%
e^{(t-s)A}dM_{s}^{N}\right\Vert _{L^{2}(\mathbb{R}^{d})}^{p}\right] \\
&  \leq C_{p}\left(  \frac{\sigma^2}{N}\int_{0}^{T}\frac{c}{(t-s)^{1-\epsilon}%
}N^{(\alpha+\epsilon)^{\ast}}ds\right)  ^{p/2}\\
&  =C_{p,T}\left(  \frac{N^{(\alpha+\epsilon)^{\ast}}}{N}\right)  ^{p/2}.
\end{align*}

Choosing $\alpha$ so small that $(\alpha+\epsilon)^{\ast}\leq1$, i.e.
$\beta\leq\frac{d}{2(\alpha+\epsilon)+d}<1$, we get a uniform bound on the
martingale term.

Finally, thanks to the boundness on $b$ we get the following estimate%
\begin{align*}
&  \left\vert W_{N}\ast\left(  b(u_{t}^{N},m_{t}^{N})S_{t}^{N}\right)  \left(
x\right)  \right\vert \\
&  =\left\vert \int_{\mathbb{R}^{d}}W_{N}\left(  x-y\right)  b(u_{t}^{N}%
,m_{t}^{N})\left(  y\right)  S_{t}^{N}\left(  dy\right)  \right\vert \\
&  \leq\int_{\mathbb{R}^{d}}W_{N}\left(  x-y\right)  \left\vert b(u_{t}%
^{N},m_{t}^{N})\left(  y\right)  \right\vert S_{t}^{N}\left(  dy\right) \\
&  \leq C\int_{\mathbb{R}^{d}}W_{N}\left(  x-y\right)  S_{t}^{N}\left(
dy\right) \\
&  =Cu_{t}^{N}\left(  x\right)
\end{align*}
hence
\[
||W_{N}\ast\left(  b(u_{t}^{N},m_{t}^{N})S_{t}^{N}\right)  ||_{L^{2}%
(\mathbb{R}^{d})}^{2}\leq C||u_{t}^{N}||_{L^{2}(\mathbb{R}^{d})}%
\]
\newline\textbf{Step 2} (estimate in $L^{2}(\mathbb{R}^{d})$). Consider the
case $\alpha=0$ in the previous computations. We have proved, with the
notation $H=L^{2}(\mathbb{R}^{d})$, that
\[
\left\Vert u_{t}^{N}\right\Vert _{L^{p}\left(  \Omega;H\right)  }\leq
C+\left\Vert \int_{0}^{t}e^{(t-s)A}f_{s}^{N}ds\right\Vert _{L^{p}\left(
\Omega;H\right)  }.
\]
Thus%
\[
\left\Vert u_{t}^{N}\right\Vert _{L^{p}\left(  \Omega;H\right)  }\leq
C+\int_{0}^{t}\left\Vert e^{(t-s)A}f_{s}^{N}\right\Vert _{L^{p}\left(
\Omega;H\right)  }ds.
\]
We have%
\begin{align*}
\left\Vert e^{(t-s)A}f_{s}^{N}\right\Vert _{L^{p}\left(  \Omega;H\right)
}  &  =\mathbb{E}\left[  \left\Vert \nabla\cdot e^{(t-s)A}(W_{N}%
\ast\left(  b(u_{t}^{N},m_{t}^{N})S_{t}^{N}\right)  \right\Vert _{L^{2}%
(\mathbb{R}^{d})}^{p}\right]  ^{1/p}\\
&  \leq\left\Vert \nabla\cdot e^{(t-s)A}\right\Vert _{L^{2}\rightarrow
L^{2}}\mathbb{E}\left[  \left\Vert W_{N}\ast b(u_{t}^{N},m_{t}^{N})S_{t}%
^{N}\right\Vert _{L^{2}(\mathbb{R}^{d})}^{p}\right]  ^{1/p}\\
&  \leq\frac{C}{(t-s)^{\frac{1}{2}}}\left\Vert u_{s}^{N}\right\Vert
_{L^{p}\left(  \Omega;H\right)  }%
\end{align*}
using properties on the analytical semigroup \eqref{prop2_semigroup}, \eqref{prop3_semigroup} and the last bound of Step 1. Therefore%
\[
\left\Vert u_{t}^{N}\right\Vert _{L^{p}\left(  \Omega;H\right)  }\leq
C+\int_{0}^{t}\frac{C}{(t-s)^{\frac{1}{2}}}\left\Vert u_{s}^{N}\right\Vert
_{L^{p}\left(  \Omega;H\right)  }ds.
\]
A generalised form of Gronwall lemma implies%
\[
\sup_{t\in\left[  0,T\right]  }\left\Vert u_{t}^{N}\right\Vert _{L^{p}\left(
\Omega;H\right)  }\leq C
\]
where the constant $C$ depends on $p$ but not on $N$.

\textbf{Step 3} (estimate in $W^{\alpha,2}(\mathbb{R}^{d})$). Similarly to the
beginning of Step 2, we have%
\[
\left\Vert u_{t}^{N}\right\Vert _{L^{p}\left(  \Omega;\widetilde{H}\right)
}\leq C+\int_{0}^{t}\left\Vert e^{(t-s)A}f_{s}^{N}\right\Vert
_{L^{p}\left(  \Omega;\widetilde{H}\right)  }ds
\]
where now $\widetilde{H}=W^{\alpha,2}(\mathbb{R}^{d})$;\ and recalling some properties of the analytical semigroup, see \eqref{prop1_semigroup},\eqref{prop2_semigroup}, \eqref{prop3_semigroup} similarly we get, 
\[
\left\Vert e^{(t-s)A}f_{s}^{N}\right\Vert _{L^{p}\left(  \Omega
;\widetilde{H}\right)  }\leq\frac{C}{(t-s)^{\frac{\alpha+1}{2}}}\left\Vert
u_{s}^{N}\right\Vert _{L^{p}\left(  \Omega;H\right)  }.
\]
But from Step 2 we know that $\left\Vert u_{s}^{N}\right\Vert _{L^{p}\left(
\Omega;H\right)  }$ is uniformly bounded, hence for $\alpha<1$ we deduce the
claim of the Lemma.
\end{proof}

\subsection{Tightness of $\left(  u_{t}^{N},m_{t}^{N}\right)  $}

Recall from Section \ref{sect comp} that the space there denoted by
$\mathcal{W}$ is compactly embedded into $C\left(  \left[  0,T\right]
;W_{loc}^{\alpha-\epsilon,2}\left(  \mathbb{R}^{d}\right)  \right)  $, when
$s_{1}=\frac{1}{3}$ and $r_{1}=4$ and when, having chosen $\alpha$ small
enough related to the original choice of $\beta$ in order that the result of
Lemma \ref{lem:4} is true, we take $r_{0}$ large enough.

In order to prove tightness of the family of laws of $u_{t}^{N}$ in
$\mathcal{W}$ we have to prove that $u_{t}^{N}$ is bounded in probability in
$L^{r_{0}}\left(  0,T;W^{\alpha,2}\left(  \mathbb{R}^{d}\right)  \right)  $
and in $W^{s_{1},r_{1}}\left(  0,T;W^{-2,2}\left(  \mathbb{R}^{d}\right)
\right)  $. For the first claim it is sufficient to prove that%
\[
\mathbb{E}\int_{0}^{T}\left\Vert u_{t}^{N}\right\Vert _{W^{\alpha,2}\left(
\mathbb{R}^{d}\right)  }^{r_{0}}dt\leq C
\]
and this is true by Lemma \ref{lem:4}, because%
\[
\mathbb{E}\int_{0}^{T}\left\Vert u_{t}^{N}\right\Vert _{W^{\alpha,2}\left(
\mathbb{R}^{d}\right)  }^{r_{0}}dt=\int_{0}^{T}\mathbb{E}\left[  \left\Vert
u_{t}^{N}\right\Vert _{W^{\alpha,2}\left(  \mathbb{R}^{d}\right)  }^{r_{0}%
}\right]  dt\leq\sup_{t\in\left[  0,T\right]  }\mathbb{E}\left[  \left\Vert
u_{t}^{N}\right\Vert _{W^{\alpha,2}(\mathbb{R}^{d})}^{r_{0}}\right]  .
\]
The second claim is proved in the next Proposition.

\begin{prop}\label{bound_tightness}
\label{prop:1}The family $\{u_{t}^{N}\}_{N}$ is bounded in probability in
$W^{s_{1},r_{1}}\left(  0,T;W^{-2,2}\left(  \mathbb{R}^{d}\right)  \right)  $.
\end{prop}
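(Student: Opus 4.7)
The plan is to use the fractional Sobolev-Slobodeckij characterization
\[
\|u^N\|_{W^{s_1,r_1}(0,T;W^{-2,2})}^{r_1}
=\int_0^T\|u_t^N\|_{W^{-2,2}}^{r_1}\,dt
+\int_0^T\!\!\int_0^T\frac{\|u_t^N-u_s^N\|_{W^{-2,2}}^{r_1}}{|t-s|^{1+s_1 r_1}}\,ds\,dt,
\]
take expectation and bound both summands uniformly in $N$. Boundedness in probability will then follow from Markov's inequality. The first summand is handled by Step 2 of Lemma \ref{lem:4}, which gives $\sup_t \mathbb{E}\|u_t^N\|_{L^2}^{p}\leq C_p$, together with the continuous embedding $L^2\hookrightarrow W^{-2,2}$; integrating in $t$ yields a uniform bound.

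The key step is a Hölder-$\tfrac12$ estimate on time increments, namely
\[
\mathbb{E}\,\|u_t^N-u_s^N\|_{W^{-2,2}}^{r_1}\le C\,|t-s|^{r_1/2}
\quad\text{uniformly in }N.
\]
To prove it I would use the identity of Lemma \ref{lem:1}, written as
\[
u_t^N-u_s^N
=\frac{\sigma^2}{2}\!\int_s^t\!\Delta u_r^N\,dr
+\int_s^t\mathrm{div}(W_N\ast(b(u_r^N,m_r^N)S_r^N))\,dr
+(M_t^N-M_s^N),
\]
and estimate the three terms separately in $W^{-2,2}$. For the Laplacian piece, $\|\Delta u_r^N\|_{W^{-2,2}}\lesssim\|u_r^N\|_{L^2}$, and after integration the Minkowski/Hölder inequality together with the uniform $L^p(\Omega;L^2)$ bound from Lemma \ref{lem:4} gives a contribution of order $|t-s|^{r_1}$. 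For the drift piece, the bound $|W_N\ast(b(u^N,m^N)S^N)|\le C u^N$ from Step 1 of Lemma \ref{lem:4}, combined with $\|\mathrm{div}(\cdot)\|_{W^{-2,2}}\lesssim\|\cdot\|_{L^2}$ (since $\nabla(I-A)^{-1/2}$ is bounded on $L^2$ by \eqref{prop3_semigroup}), gives again $|t-s|^{r_1}$.

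The delicate term is the martingale increment. Using $\|f\|_{W^{-2,2}}\le\|(I-A)^{-1}f\|_{L^2}$ and the Burkholder-Davis-Gundy inequality in the Hilbert space $W^{-2,2}$,
\[
\mathbb{E}\|M_t^N-M_s^N\|_{W^{-2,2}}^{r_1}
\le C_{r_1}\,\mathbb{E}\!\left(\frac{\sigma^2}{N^2}\sum_{i=1}^N\int_s^t\|(I-A)^{-1}\nabla W_N(\cdot-X_r^{i,N})\|_{L^2}^2\,dr\right)^{r_1/2}.
\]
The translation-invariance of the $L^2$ norm removes the dependence on $X_r^{i,N}$, and then the operator bound $\|\nabla(I-A)^{-1}\|_{L^2\to L^2}\le C$ plus $\|W_N\|_{L^2}^2\le CN^\beta$ (Lemma \ref{lem:2}) yields
\[
\mathbb{E}\|M_t^N-M_s^N\|_{W^{-2,2}}^{r_1}\le C\bigl(N^{\beta-1}(t-s)\bigr)^{r_1/2},
\]
which is uniformly bounded in $N$ because $\beta<1$, and gives the required $|t-s|^{r_1/2}$ factor.

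Finally, inserting the increment estimate into the Gagliardo double integral,
\[
\mathbb{E}\!\int_0^T\!\!\int_0^T\frac{\|u_t^N-u_s^N\|_{W^{-2,2}}^{r_1}}{|t-s|^{1+s_1 r_1}}\,ds\,dt
\le C\!\int_0^T\!\!\int_0^T |t-s|^{r_1/2-1-s_1 r_1}\,ds\,dt,
\]
which is finite precisely under the condition $s_1r_1-r_1/2<0$ recorded in Section~\ref{sect comp} (with the paper's choice $s_1=\tfrac13$, $r_1=4$ this reads $\tfrac43-2<0$). Combining with the first summand one obtains $\sup_N\mathbb{E}\|u^N\|_{W^{s_1,r_1}(0,T;W^{-2,2})}^{r_1}<\infty$, hence boundedness in probability. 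The main obstacle is the martingale term: one has to avoid losing the crucial $(t-s)^{r_1/2}$ factor while simultaneously absorbing the factor $N^\beta$ from $\|W_N\|_{L^2}^2$, and this is exactly where the choice of the weak space $W^{-2,2}$ (instead of $L^2$) becomes essential.
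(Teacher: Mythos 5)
Your proposal is correct and follows essentially the same route as the paper: the same splitting of the $W^{s_1,r_1}(0,T;W^{-2,2})$ norm, the same three-term decomposition of the increment $u_t^N-u_s^N$ via Lemma \ref{lem:1} (Laplacian and drift terms controlled through the $L^2$ bound of Lemma \ref{lem:4} and the embedding into $W^{-2,2}$, martingale term via Burkholder--Davis--Gundy, translation invariance and $\|W_N\|_{L^2}^2\le CN^\beta$ with $\beta<1$), and the same integrability condition $s_1r_1-\tfrac{r_1}{2}<0$ to close the Gagliardo double integral. No substantive differences to report.
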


\begin{proof}
Let us recall that a norm on $W^{s_{1},r_{1}}\left(  0,T;W^{-2,2}\left(
\mathbb{R}^{d}\right)  \right)  $ is given by the sum%
\[
\left(  \int_{0}^{T}\left\Vert f_{t}\right\Vert _{W^{-2,2}\left(
\mathbb{R}^{d}\right)  }^{r_{1}}dt\right)  ^{1/r_{1}}+\left(  \int_{0}^{T}%
\int_{0}^{T}\frac{\left\Vert f_{t}-f_{s}\right\Vert _{W^{-2,2}\left(
\mathbb{R}^{d}\right)  }^{r_{1}}}{\left\vert t-s\right\vert ^{1+s_{1}r_{1}}%
}dtds\right)  ^{1/r_{1}}.
\]
The property
\[
\mathbb{E}\int_{0}^{T}\left\Vert u_{t}^{N}\right\Vert _{W^{-2,2}\left(
\mathbb{R}^{d}\right)  }^{r_{1}}dt\leq C
\]
is a consequence of Lemma \ref{lem:4}, because $\left\Vert u_{t}%
^{N}\right\Vert _{W^{-2,2}}$ is a weaker norm than $\left\Vert u_{t}%
^{N}\right\Vert _{W^{\alpha,2}}$. We have to prove%
\[
\mathbb{E}\int_{0}^{T}\int_{0}^{T}\frac{\left\Vert u_{t}^{N}-u_{s}%
^{N}\right\Vert _{W^{-2,2}\left(  \mathbb{R}^{d}\right)  }^{r_{1}}}{\left\vert
t-s\right\vert ^{1+s_{1}r_{1}}}dtds\leq C.
\]

Thus, for $t>s$, we have to estimate
\[
\mathbb{E}\left[  ||u_{t}^{N}-u_{s}^{N}||_{W^{-2,2}(\mathbb{R}^{d})}^{r_{1}%
}\right]  .
\]
From the equation satisfied by $u_{t}^{N}$, proved in lemma \ref{lem:1} and H{\"{o}}lder inequality, we
have%
\begin{align*}
\mathbb{E}\left[  ||u_{t}^{N}-u_{s}^{N}||_{W^{-2,2}(\mathbb{R}^{d})}^{r_{1}%
}\right]   &  \leq C(t-s)^{r_{1}-1}\mathbb{E}\int_{s}^{t}||A u_{r}%
^{N}||_{W^{-2,2}(\mathbb{R}^{d})}^{r_{1}}dr+C\mathbb{E}\left[  ||M_{t}%
^{N}-M_{s}^{N}||_{W^{-2,2}(\mathbb{R}^{d})}^{r_{1}}\right] \\
&  +C(t-s)^{r_{1}-1}\mathbb{E}\int_{s}^{t}||\text{div}(W_{N}\ast b(u_{r}%
^{N},m_{r}^{N})S_{r}^{N})||_{W^{-2,2}(\mathbb{R}^{d})}^{r_{1}}dr
\end{align*}
Being $A$ a bounded operator from $L^{2}$ to $W^{-2,2}$, we have%
\[
C(t-s)^{r_{1}-1}\mathbb{E}\int_{s}^{t}||A u_{r}^{N}||_{W^{-2,2}%
(\mathbb{R}^{d})}^{r_{1}}dr\leq C(t-s)^{r_{1}-1}\int_{s}^{t}\mathbb{E}\left[
||u_{r}^{N}||_{L^{2}(\mathbb{R}^{d})}^{r_{1}}\right]  dr\leq C(t-s)^{r_{1}}%
\]
thanks to the estimate of Lemma \ref{lem:4}. Notice that the spaces $L^{2}$
and $W^{1,2}$ are continuously embedded in $W^{-2,2}$, namely there exists a
constant $C>0$ such that $||f||_{W^{-2,2}}\leq C||f||_{L^{2}}$ and
$||f||_{W^{-2,2}}$ $\leq C||f||_{W^{-1,2}}$. We shall use this in the next
computations. We have%
\begin{align*}
&  ||\text{div}(W_{N}\ast b(u_{r}^{N},m_{r}^{N})S_{r}^{N})||_{W^{-2,2}%
(\mathbb{R}^{d})}\\
&  \leq C||(W_{N}\ast b(u_{r}^{N},m_{r}^{N})S_{r}^{N}||_{W^{-1,2}%
(\mathbb{R}^{d})}\\
&  \leq C||(W_{N}\ast b(u_{r}^{N},m_{r}^{N})S_{r}^{N})||_{L^{2}(\mathbb{R}%
^{d})}\\
&  \leq C\left\Vert u_{r}^{N}\right\Vert _{L^{2}(\mathbb{R}^{d})}%
\end{align*}
where the last inequality is similar to one proved in Lemma \ref{lem:4}. Hence%
\begin{align*}
&  C(t-s)^{r_{1}-1}\mathbb{E}\int_{s}^{t}||\text{div}(W_{N}\ast b(u_{r}%
^{N},m_{r}^{N})S_{r}^{N})||_{W^{-2,2}(\mathbb{R}^{d})}^{r_{1}}dr\\
&  \leq C(t-s)^{r_{1}-1}\int_{s}^{t}\mathbb{E}\left[  ||u_{r}^{N}%
||_{L^{2}(\mathbb{R}^{d})}^{r_{1}}\right]  dr\leq C(t-s)^{r_{1}}%
\end{align*}
as above. Therefore, until now, we have proved
\[
\mathbb{E}\left[  ||u_{t}^{N}-u_{s}^{N}||_{W^{-2,2}(\mathbb{R}^{d})}^{r_{1}%
}\right]  \leq C(t-s)^{r_{1}}+C\mathbb{E}\left[  ||M_{t}^{N}-M_{s}%
^{N}||_{W^{-2,2}(\mathbb{R}^{d})}^{r_{1}}\right]  .
\]
Estimating the martingale as in Lemma \ref{lem:4}, we have
\begin{align*}
&  \mathbb{E}\left[  ||M_{t}^{N}-M_{s}^{N}||_{W^{-2,2}(\mathbb{R}^{d})}%
^{r_{1}}\right] \\
&  =\mathbb{E}\left[  \left\Vert \frac{\sigma}{N}\sum_{i=1}^{N}\int_{s}%
^{t}\nabla W_{N}(\cdot-X_{u}^{i,N})dB_{u}^{i}\right\Vert _{W^{-2,2}%
(\mathbb{R}^{d})}^{r_{1}}\right] \\
&  \leq C_{r_{1}}\mathbb{E}\left[  \left(  \frac{\sigma^2}{N^{2}}\sum_{i=1}^{N}%
\int_{s}^{t}\left\Vert \nabla W_{N}(\cdot-X_{u}^{i,N})\right\Vert
_{W^{-2,2}(\mathbb{R}^{d})}^{2}du\right)  ^{r_{1}/2}\right] \\
&  \leq C_{r_{1}}^{\prime}\mathbb{E}\left[  \left(  \frac{\sigma^2}{N^{2}}\sum
_{i=1}^{N}\int_{s}^{t}\left\Vert W_{N}(\cdot-X_{u}^{i,N})\right\Vert
_{L^{2}(\mathbb{R}^{d})}^{2}du\right)  ^{r_{1}/2}\right] \\
&  =C_{r_{1}}^{\prime}\mathbb{E}\left[  \left(  \frac{\sigma^2}{N}\left\Vert
W_{N}\right\Vert _{L^{2}(\mathbb{R}^{d})}^{2}\right)  ^{r_{1}/2}\right] \\
&  \leq C_{r_{1}}^{\prime}\mathbb{E}\left[  \left(  \frac{\sigma^2}{N}N^{\beta
}\right)  ^{r_{1}/2}\right]  \left(  t-s\right)  ^{r_{1}/2}%
\end{align*}
by Lemma \ref{lem:2}, hence (being $\beta<1$)%
\[
\mathbb{E}\left[  ||M_{t}^{N}-M_{s}^{N}||_{W^{-2,2}(\mathbb{R}^{d})}^{r_{1}%
}\right]  \leq C\left(  t-s\right)  ^{r_{1}/2}.
\]
Summarising,%
\[
\mathbb{E}\left[  ||u_{t}^{N}-u_{s}^{N}||_{W^{-2,2}(\mathbb{R}^{d})}^{r_{1}%
}\right]  \leq C\left(  t-s\right)  ^{r_{1}/2}.
\]
It follows that%
\begin{align*}
&  \mathbb{E}\int_{0}^{T}\int_{0}^{T}\frac{\left\Vert u_{t}^{N}-u_{s}%
^{N}\right\Vert _{W^{-2,2}\left(  \mathbb{R}^{d}\right)  }^{r_{1}}}{\left\vert
t-s\right\vert ^{1+s_{1}r_{1}}}dtds\\
&  \leq\mathbb{E}\int_{0}^{T}\int_{0}^{T}\frac{C}{\left\vert t-s\right\vert
^{1+s_{1}r_{1}-\frac{r_{1}}{2}}}dtds
\end{align*}
which is finite if $s_{1}r_{1}-\frac{r_{1}}{2}<0$; with our choice
$s_{1}=\frac{1}{3}$ and $r_{1}=4$, this is true.
\end{proof}

\begin{cor}
\label{corollary tight}The family $\{u_{t}^{N}\}_{N}$ is bounded in
probability in $\mathcal{W}$, and therefore the family of laws of $\{u_{t}%
^{N}\}_{N}$ is tight in $C\left(  \left[  0,T\right]  ;W_{loc}^{\alpha
-\epsilon,2}\left(  \mathbb{R}^{d}\right)  \right)  $. In particular, it is
tight in $C\left(  L_{loc}^{2}\right)  $. If $Q_{u}$ is any limit measure of
this family and $u$ is a r.v. with law $Q_{u}$, we also have the property%
\[
\mathbb{E}\int_{0}^{T}\left\Vert u_{t}\right\Vert _{L^{2}\left(
\mathbb{R}^{d}\right)  }^{r_{0}}dt<\infty
\]
namely $Q_{u}$ is supported on $L^{p}\left(  0,T;L^{2}(\mathbb{R}^{d})\right)
$ for some $p>2$. Therefore, if we prove that $Q_{u}$ is supported on mild
solutions, by Lemma \ref{lemma regularity} we deduce that $Q_{u}$ is supported
on $C\left(  L^{2}\right)  $.
\end{cor}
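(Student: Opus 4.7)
The plan is to assemble the previously established bounds into the asserted tightness and then transfer the $L^{r_0}$-in-time $L^2$-in-space integrability to any limit measure. I proceed in three short steps.

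First, I combine Lemma \ref{lem:4} (which yields $\sup_N \mathbb{E}\int_0^T \|u_t^N\|_{W^{\alpha,2}}^{r_0}\,dt \leq C$) with Proposition \ref{bound_tightness} (which bounds $\mathbb{E}\|u^N\|_{W^{s_1,r_1}(0,T;W^{-2,2})}^{r_1}$ uniformly in $N$). Markov's inequality then provides, for every $\eta>0$, a radius $R_\eta$ with $P(\|u^N\|_\mathcal{W} > R_\eta) < \eta$ uniformly in $N$; this is exactly boundedness in probability in $\mathcal{W}$.

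Second, the compact embedding $\mathcal{W} \hookrightarrow C([0,T]; W_{loc}^{\alpha-\epsilon,2}(\mathbb{R}^d))$ recalled in Section \ref{sect comp} turns the $\mathcal{W}$-balls into relatively compact sets in $C([0,T];W_{loc}^{\alpha-\epsilon,2})$, whence the family of laws of $u^N$ is tight there. Since for every $n$ the inclusion $W^{\alpha-\epsilon,2}(B_n)\hookrightarrow L^2(B_n)$ is continuous, the induced map from $C([0,T];W_{loc}^{\alpha-\epsilon,2})$ to $C(L_{loc}^2)$ is continuous, and Lemma \ref{lem:3} propagates tightness to $C(L_{loc}^2)$.

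Third, let $Q_u$ be any subsequential limit of these laws and $u^{N_k}$ the corresponding subsequence. By Skorokhod's representation theorem I may realise $u^{N_k}\to u$ almost surely in $C([0,T]; W_{loc}^{\alpha-\epsilon,2})$, with $u$ having law $Q_u$. The identity $\|f\|_{L^2(\mathbb{R}^d)}^2 = \sup_n \|f\|_{L^2(B_n)}^2$ makes the global $L^2$ norm lower semicontinuous with respect to the local topology, and hence so is the functional $\Phi(f):=\int_0^T\|f_t\|_{L^2(\mathbb{R}^d)}^{r_0}\,dt$ on $C([0,T]; W_{loc}^{\alpha-\epsilon,2})$. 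Fatou's lemma together with Lemma \ref{lem:4} then gives
\[
\mathbb{E}\Phi(u) \leq \liminf_k \mathbb{E}\Phi(u^{N_k}) \leq T\,\sup_{t,N}\mathbb{E}\|u_t^N\|_{W^{\alpha,2}}^{r_0} < \infty.
\]
Since $r_0$ was chosen large enough to satisfy the condition $\frac{\theta}{3} > \frac{1-\theta}{r_0} + \frac{\theta}{4}$, we in particular have $r_0>2$, so $Q_u$ is supported on $L^{r_0}(0,T;L^2(\mathbb{R}^d))$ as claimed. The final sentence about $C(L^2)$-support is a direct appeal to Lemma \ref{lemma regularity}, conditional on the separate (and subsequently proved) fact that $Q_u$ concentrates on mild solutions.

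The only step demanding genuine care is the lower semicontinuity transfer in the third paragraph, and even there the key observation is the monotone-limit representation of the global $L^2$-norm; everything else is bookkeeping around the ingredients of Section \ref{sect comp} and Lemma \ref{lem:3}.
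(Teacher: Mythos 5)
Your proposal is correct and follows exactly the route the paper intends: boundedness in probability in $\mathcal{W}$ from Lemma \ref{lem:4} plus Proposition \ref{bound_tightness} via Markov, tightness through the compact embedding of Section \ref{sect comp} and Lemma \ref{lem:3}, and transfer of the $L^{r_0}(0,T;L^{2})$ moment to the limit law. The paper leaves the corollary without a written proof, and your third step (Skorokhod representation, lower semicontinuity of the global $L^{2}$-norm as a supremum over balls, and Fatou) is the standard way to fill in that omitted detail, equivalent to a Portmanteau argument with truncated lower semicontinuous functionals.
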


\begin{prop}
\label{prop:2} The family of laws of $\{m_{t}^{N}\}_{N}$ is tight in $C\left(
L_{loc}^{2}\right)  $. If $Q_{m}$ is any limit measure of
this family and $m$ is a r.v. with law $Q_{m}$, we also have the property%
\[
\mathbb{E}\int_{0}^{T}\left\Vert m_{t}\right\Vert _{L^{2}\left(
\mathbb{R}^{d}\right)  }^{r_{0}}dt<\infty
\]
namely $Q_{m}$ is supported on $L^{p}\left(  0,T;L^{2}(\mathbb{R}^{d})\right)
$ for some $p>2$. Therefore, if we prove that $Q_{m}$ is supported on mild
solutions, by Lemma \ref{lemma regularity} we deduce that $Q_{m}$ is supported
on $C\left(  L^{2}\right)  $.
\end{prop}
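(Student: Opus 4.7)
The strategy is to realise $m^N$ as a deterministic continuous image of $u^N$ and then transfer tightness from $\{u^N\}_N$ (already established in Corollary \ref{corollary tight}) via Lemma \ref{lem:3}. The explicit formula from the proof of Proposition \ref{prop particles} gives, pointwise in $(\omega,x)$,
\[
m_t^N(x)=F_\zeta\!\left(m_0(x),\int_0^t u_s^N(x)\,ds\right),
\]
so the deterministic map $\Phi\colon C(L_{loc}^2)\to C(L_{loc}^2)$ defined by $[\Phi(u)]_t(x):=F_\zeta(m_0(x),\int_0^t u_s(x)\,ds)$ satisfies $m^N=\Phi(u^N)$ pathwise.

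The first step is to verify that $\Phi$ is Lipschitz continuous in the metric $d_{L_{loc}^2}$. A direct differentiation of the two formulas for $F_\zeta$ given in Proposition \ref{prop particles} shows that $|\partial_b F_\zeta(a,b)|\le \lambda M^\zeta$ for all $a\in[0,M]$, $b\ge 0$, so $F_\zeta(a,\cdot)$ is globally Lipschitz uniformly in $a\in[0,M]$. Combined with Minkowski's inequality this yields, for every $R>0$,
\[
\sup_{t\in[0,T]}\|[\Phi(u)]_t-[\Phi(v)]_t\|_{L^2(B_R)}\le \lambda M^\zeta T\sup_{t\in[0,T]}\|u_t-v_t\|_{L^2(B_R)};
\]
a similar computation bounds $\|[\Phi(u)]_t\|_{L^2(B_R)}$ by $CM|B_R|^{1/2}$ uniformly in $t$, and continuity in $t$ follows from $\|[\Phi(u)]_t-[\Phi(u)]_s\|_{L^2(B_R)}\le \lambda M^\zeta \int_s^t\|u_r\|_{L^2(B_R)}\,dr$. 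Hence $\Phi$ maps $C(L_{loc}^2)$ into itself and is continuous. Corollary \ref{corollary tight} and Lemma \ref{lem:3} then immediately give tightness of $\{m^N\}_N$ in $C(L_{loc}^2)$.

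For the moment estimate I would use the deterministic a priori bound that follows at once from equation (\ref{micro_ecm}): since $u_s^N\ge 0$ and $0\le m_0\le M$, the ODE keeps $m_t^N(\cdot)$ monotonically decreasing in $t$, so $0\le m_t^N(x)\le m_0(x)$ for all $t,x,N,\omega$ and in particular $\|m_t^N\|_{L^2(\mathbb{R}^d)}\le \|m_0\|_{L^2(\mathbb{R}^d)}$. Along any subsequence along which the laws of $m^N$ converge to $Q_m$, Skorohod's representation realises $m^{N_k}\to m$ almost surely in $C(L_{loc}^2)$; extracting a further subsequence with $m_t^{N_k}(x)\to m_t(x)$ for a.e.\ $x\in B_R$ and using Fatou on $B_R$ followed by monotone convergence in $R$ gives $\|m_t\|_{L^2(\mathbb{R}^d)}\le \|m_0\|_{L^2(\mathbb{R}^d)}$ a.s., hence $\mathbb{E}\int_0^T\|m_t\|_{L^2}^{r_0}\,dt\le T\|m_0\|_{L^2}^{r_0}<\infty$.

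None of the steps is a genuine obstacle; the argument is essentially a functional-analytic rewriting of the explicit solution formula for the ODE. The only point requiring mild care is the passage to the limit for the moment bound, where one must combine the local $L^2$ convergence with the exhaustion of $\mathbb{R}^d$ by the balls $B_R$ to recover a global $L^2$ norm estimate.
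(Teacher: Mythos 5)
Your argument is correct and is essentially the paper's own proof: both realise $m^N=\Phi(u^N)$ through the explicit formula $F_\zeta$ from Proposition \ref{prop particles} and transfer tightness from Corollary \ref{corollary tight} via Lemma \ref{lem:3}, with your Lipschitz computation and the deterministic bound $0\le m^N_t\le m_0$ just making explicit what the paper leaves implicit. The one point to adjust is that $\Phi$ should be defined on the nonnegative cone $C_{+}\left(L_{loc}^{2}\right)$, where the laws of $u^N$ are supported, since the bound $\left|\partial_b F_\zeta(a,b)\right|\le \lambda M^{\zeta}$ only holds for $b\ge 0$ (for $\zeta\ge 2$ the formula even degenerates for negative $b$); with that restriction, which the paper states explicitly, your continuity and moment arguments go through.
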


\begin{proof}
Call $C_{+}\left(  L_{loc}^{2}\right)  $ the space of nonnegative functions of
class $C\left(  L_{loc}^{2}\right)  $. Recall the explicit form of the
solution of equation (\ref{micro_ecm}) given in Proposition
\ref{prop particles}. We want to apply Lemma \ref{lem:3} with $X_{1}%
=C_{+}\left(  L_{loc}^{2}\right)  $, $X_{2}=C_{+}\left(  L_{loc}^{2}\right)
$, $\mathcal{G}_{1}$ given by the family of laws of $\{u_{t}^{N}\}_{N}$,
$\mathcal{G}_{2}$ given by the family of laws of $\{m_{t}^{N}\}_{N}$, and
$\phi$ given by (for $f\in C_{+}\left(  L_{loc}^{2}\right)  $, it is here that
we use non negativity)%
\[
\left(  \phi f\right)  _{t}\left(  x\right)  :=F_{\zeta}\left(  m_{0}%
(x),\int_{0}^{t}f_{s}(x)ds\right)
\]
where $F_{\zeta}\left(  a,b\right)  $ has been introduced in Proposition
\ref{prop particles}. Tightness of the family $\mathcal{G}_{1}$ is given by
Proposition \ref{prop:1}. To prove continuity of $\phi$, we just notice that
the map
\[
f\mapsto\int_{0}^{t}f(s,x)ds
\]
is continuous from $C\left(  L_{loc}^{2}\right)  $ to $C\left(  L_{loc}%
^{2}\right)  $ and then we have to compose with a bounded continuous map.
\end{proof}

\section{Passage to the limit}

Denote by $Q^{N}$ the law of $(u^{N},m^{N})$, on the space $C\left(
L_{loc}^{2}\right)  \times C\left(  L_{loc}^{2}\right)  $. We have proved
above that the family $\left\{  Q^{N}\right\}  $ is tight. Hence, by Prohorov
theorem, there is a subsequence $Q^{N_{k}}$ which converges weakly to some
probability measure $Q$ on $C\left(  L_{loc}^{2}\right)  \times C\left(
L_{loc}^{2}\right)  $. Moreover, from Corollary \ref{corollary tight}, the
marginal $Q_{u}$ on the first component is supported on the space
$L^{p}\left(  0,T;L^{2}(\mathbb{R}^{d})\right)  $ for some $p>2$. We want to
prove first that $Q$ is supported on the class of mild solutions of system
\eqref{eq:21}. Second, we shall prove that this class has a unique element
$\left(  u,m\right)  $; it will follow that the full sequence $\left\{
Q^{N}\right\}  $ converges to $\delta_{\left(  u,m\right)  }$ in the weak
sense of measures; and that $(u^{N},m^{N})$ converges in probability to
$\left(  u,m\right)  $, because the limit is deterministic. This will complete
the proof of Theorem \ref{main thm}; verification of the properties $u_{t}%
\geq0$ and $0\leq m_{t}\leq M$ for every $t\in\left[  0,T\right]  $ are done
with the same technique used in the proof of the next proposition, through
suitable continuous functionals; we omit the details. The regularity $C\left(
L^{2}\right)  \times C\left(  L^{2}\right)  $ of $\left(  u,m\right)  $ comes
from Corollary \ref{corollary tight} and Proposition \ref{prop:2}.
In the following proof, we will prove that $Q$ is supported on the class of mild solution of system \eqref{eq:21}. 
The proof of the following result is quite classical. It has been widely used in the mean field theory, see \cite{Sznitman}. Our case is very close to the mean field framework , but it can not be considered a particular case of the known mean field theories, in particular because of the presence of $u^N$ and the dependence of the function $g$ on a density of particles. To prove this step, we adopt the approach of \cite{KL}, see Chapter 4, although presumably it can be given along several classical lines, see \cite{Sznitman}. Before going in to some details of the proof, we introduce a family of functionals which characterizes
the solution of the system:
\begin{align*}
(u,m) &  \rightarrow\Psi_{\phi}(u,m):=\sup_{t\in\left[  0,T\right]
}\left\vert \left\langle u_{t}-u_{0},\phi\right\rangle -\frac{\sigma^2}{2}\int_{0}%
^{t}\left\langle u_{s},\Delta\phi\right\rangle ds-\int_{0}^{t}\left\langle
u_{s},b(u_{s},m_{s})\cdot\nabla\phi\right\rangle ds\right\vert \\
&  +\sup_{t\in\left[  0,T\right]  }\left\vert \langle m_{t}\left(
\cdot\right)  -F_{\zeta}\left(  m_{0}(\cdot),\int_{0}^{t}u_{s}(\cdot
)ds\right)  ,\phi\rangle\right\vert
\end{align*}
where $\phi\in C_{c}^{\infty}(\mathbb{R}^{d})$.
On these family we prove a preliminary result, to Proposition \ref{prop:passage to the limit}.

\begin{lem}\label{lem:pre_convergence}
Let $Q^{N_k}$ be the subsequence of measure of $Q^N$ that convergesp to $Q$ on $C\left(  L_{loc}^{2}\right)  \times C\left(
L_{loc}^{2}\right)  $, then
\[
\lim_{k\to \infty}Q^{N_{k}}\left(  (u,m):\Psi_{\phi}(u,m)>\delta\right)=0
\]
\end{lem}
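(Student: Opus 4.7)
\emph{Plan.} The strategy is to prove the stronger statement that $\Psi_\phi(u^{N},m^{N})\to 0$ in probability as $N\to\infty$. Since $Q^{N}$ is by construction the law of $(u^{N},m^{N})$, Markov's inequality then yields $Q^{N}(\Psi_\phi>\delta)=P(\Psi_\phi(u^{N},m^{N})>\delta)\to 0$, which in particular gives the conclusion along the subsequence $Q^{N_k}$.

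First, \emph{the ODE supremum in $\Psi_\phi(u^{N},m^{N})$ is identically zero}: by the explicit representation obtained in the proof of Proposition~\ref{prop particles}, $m^{N}_t(x)=F_\zeta\bigl(m_0(x),\int_0^t u^{N}_s(x)\,ds\bigr)$ $P$-a.s.\ for every $t$, so the corresponding integrand vanishes. Hence it suffices to control the PDE supremum. For this, apply Lemma~\ref{lem:1} with the time-independent test function $\psi_N(y):=\int W_N(x-y)\phi(x)\,dx\in C_c^\infty(\mathbb{R}^{d})$, chosen precisely so that $\langle S^{N}_t,\psi_N\rangle=\langle u^{N}_t,\phi\rangle$. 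Differentiation passes through the integral, giving $\nabla\psi_N(y)=\int W_N(x-y)\nabla\phi(x)\,dx$ and $\Delta\psi_N(y)=\int W_N(x-y)\Delta\phi(x)\,dx$, so the Laplacian term produced by Lemma~\ref{lem:1} coincides exactly with $\tfrac{\sigma^{2}}{2}\int_0^t\langle u^{N}_s,\Delta\phi\rangle\,ds$. After rearrangement one obtains
\[
\Psi_\phi(u^{N},m^{N})\;\leq\;\sup_{t\in[0,T]}|R^{N}_t|\;+\;\sup_{t\in[0,T]}|M^{N,\psi_N}_t|,
\]
with
\[
R^{N}_t=\int_0^t\!\!\int S^{N}_s(dy)\!\int W_N(x-y)\,\nabla\phi(x)\cdot\bigl[b(u^{N}_s,m^{N}_s)(y)-b(u^{N}_s,m^{N}_s)(x)\bigr]\,dx\,ds
\]
and $M^{N,\psi_N}_t=\tfrac{\sigma}{N}\sum_{i=1}^{N}\int_0^t\nabla\psi_N(X^{i,N}_s)\cdot dB^i_s$.

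The two terms are then handled separately. For the remainder, on the support of $W_N(\cdot-y)$ one has $|x-y|\le C\,N^{-\beta/d}$, while \eqref{lip 2 b} shows that $\nabla b(u,m)$ is uniformly bounded; hence $|b(u^{N}_s,m^{N}_s)(y)-b(u^{N}_s,m^{N}_s)(x)|\le C\,N^{-\beta/d}$, and combining with $\int W_N=1$ and $S^{N}_s(\mathbb{R}^{d})=1$ gives the deterministic estimate $\sup_{t\le T}|R^{N}_t|\le C(\phi,T)\,N^{-\beta/d}\to 0$. For the martingale, Doob and Burkholder--Davis--Gundy together with $\|\nabla\psi_N\|_\infty\le\|\nabla\phi\|_\infty$ yield
\[
\mathbb{E}\Bigl[\sup_{t\le T}|M^{N,\psi_N}_t|^{2}\Bigr]\;\leq\;\frac{C\,T\,\sigma^{2}}{N}\|\nabla\phi\|_\infty^{2}\;\to\; 0,
\]
so $\sup_t|M^{N,\psi_N}_t|\to 0$ in probability.

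The main obstacle is this replacement of the microscopic drift in Lemma~\ref{lem:1} (which tests $b$ against the singular measure $S^{N}_s$) by the macroscopic drift in $\Psi_\phi$ (which tests $b$ against $u^{N}_s$ at the same argument as $\nabla\phi$); the Lipschitz bound \eqref{lip 2 b} on $b$ together with the shrinking support of $W_N$ are exactly what is needed to kill this error, and the resulting rate $N^{-\beta/d}$ is admissible for every $\beta\in(0,1)$, which is why the analysis goes through on the full range of $\beta$.
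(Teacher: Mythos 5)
Your proof is correct and follows essentially the same route as the paper: the $m$-term vanishes by the explicit formula, the identity of Lemma \ref{lem:1} reduces the $u$-term to the commutator between mollification and the drift plus the martingale, and the commutator is killed by the spatial Lipschitz bound on $b$ together with the $N^{-\beta/d}$ support of $W_N$ (testing Lemma \ref{lem:1} against $\psi_N=\check W_N\ast\phi$ is just the dual formulation of the paper's pairing of the mollified equation with $\phi$). The only differences are mild simplifications: you bound the commutator deterministically via the unit masses of $W_N$ and $S^N_s$ instead of through $\Vert u^N_s\Vert_{L^2}$, Markov and Lemma \ref{lem:4}, and you bound the martingale via $\Vert\nabla\psi_N\Vert_\infty\le\Vert\nabla\phi\Vert_\infty$ (rate $1/N$) instead of through Lemma \ref{lem:2}, both of which are fine.
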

\begin{proof}
One has that
\begin{align*}
&  Q^{N_{k}}\left(  (u,m):\Psi_{\phi}(u,m)>\delta\right)  \\
&  =\mathbb{P}\left(  \sup_{t\in\left[  0,T\right]  }\left\vert \left\langle
u_{t}^{N_{k}}-u_{0}^{N_{k}},\phi\right\rangle -\frac{\sigma^2}{2}\int_{0}^{t}\left\langle
u_{s}^{N_{k}},\Delta\phi\right\rangle ds-\int_{0}^{t}\left\langle u_{s}%
^{N_{k}},b(u_{s}^{N_{k}},m_{s}^{N_{k}})\cdot\nabla\phi\right\rangle
ds\right\vert +\right.  \\
&  +\left.  \sup_{t\in\left[  0,T\right]  }\left\vert \langle m_{t}^{N_{k}%
}\left(  \cdot\right)  -F_{\zeta}\left(  m_{0}(\cdot),\int_{0}^{t}u_{s}%
^{N_{k}}(\cdot)ds\right)  ,\phi\rangle\right\vert >\delta\right)  .
\end{align*}
The second term of the functional is clearly zero, because of the equation
satisfied by $m_{t}^{N_{k}}$. Using the identity satisfied by $u_{t}^{N_{k}}$,
we get%
\begin{align*}
& Q^{N_{k}}\left(  (u,m):\Psi_{\phi}(u,m)>\delta\right)  \\
& \leq\mathbb{P}\left(  \sup_{t\in\left[  0,T\right]  }\left\vert \left\langle
\int_{0}^{t}\left[  W_{N_{k}}\ast(b(u_{s}^{N_{k}},m_{s}^{N_{k}})S_{s}^{N_{k}%
})-u_{s}^{N_{k}}b(u_{s}^{N_{k}},m_{s}^{N_{k}})\right]  ds,\nabla
\phi\right\rangle +\left\langle M_{t}^{N_{k}},\phi\right\rangle \right\vert
>\delta\right)  .
\end{align*}

Hence it is sufficient to prove that, for given $\delta>0$, both the following
probabilities
\[
\mathbb{P}\left(  \int_{0}^{T}\left\vert \left\langle W_{N_{k}}\ast
(b(u_{s}^{N_{k}},m_{s}^{N_{k}})S_{s}^{N_{k}})-u_{s}^{N_{k}}b(u_{s}^{N_{k}%
},m_{s}^{N_{k}}),\nabla\phi\right\rangle \right\vert ds>\delta\right)
\]
and
\[
\mathbb{P}\left(  \sup_{t\in\left[  0,T\right]  }\left\vert \left\langle
M_{t}^{N_{k}},\phi\right\rangle \right\vert >\delta\right)
\]
converge to zero as $k\rightarrow\infty$. The first probability is bounded
above as follows:%
\[
\leq\mathbb{P}\left(  C\int_{0}^{T}\left\Vert W_{N_{k}}\ast(b(u_{s}^{N_{k}%
},m_{s}^{N_{k}})S_{s}^{N_{k}})-u_{s}^{N_{k}}b(u_{s}^{N_{k}},m_{s}^{N_{k}%
})\right\Vert _{L^{2}}ds>\delta\right)  .
\]
We have%
\begin{align*}
&  \left\vert \left(  W_{N_{k}}\ast(b(u_{s}^{N_{k}},m_{s}^{N_{k}})S_{s}%
^{N_{k}})\right)  \left(  x\right)  -u_{s}^{N_{k}}\left(  x\right)
b(u_{s}^{N_{k}},m_{s}^{N_{k}})\left(  x\right)  \right\vert \\
&  \leq\int W_{N_{k}}\left(  x-y\right)  \left\vert b(u_{s}^{N_{k}}%
,m_{s}^{N_{k}})\left(  x\right)  -b(u_{s}^{N_{k}},m_{s}^{N_{k}})\left(
y\right)  \right\vert S_{s}^{N_{k}}\left(  dy\right)  \\
&  \leq C^{\prime\prime}\int W_{N_{k}}\left(  x-y\right)  \left\vert
x-y\right\vert S_{s}^{N_{k}}\left(  dy\right)
\end{align*}
having used property (\ref{lip 2 b}),
\[
\leq CN_{k}^{-\beta/d}\int W_{N_{k}}\left(  x-y\right)  S_{s}^{N_{k}}\left(
dy\right)
\]
having used the form $W_{N}(x):=N^{\beta}W(N^{\beta/d}x)$ and the property of
compact support of $W$,%
\[
=CN_{k}^{-\beta/d}u_{s}^{N_{k}}\left(  x\right)  .
\]
Hence the last probability above is%
\begin{align*}
&  \leq\mathbb{P}\left(  CN_{k}^{-\beta/d}\int_{0}^{T}\left\Vert u_{s}^{N_{k}%
}\right\Vert _{L^{2}}ds>\delta\right)  \\
&  \leq\frac{CN_{k}^{-\beta/d}}{\delta}\mathbb{E}\int_{0}^{T}\left\Vert
u_{s}^{N_{k}}\right\Vert _{L^{2}}ds
\end{align*}
which goes to zero (recall the bound of Lemma \ref{lem:4}).

Finally,%
\begin{align*}
\mathbb{P}\left(  \sup_{t\in\left[  0,T\right]  }\left\vert \left\langle
M_{t}^{N_{k}},\phi\right\rangle \right\vert >\delta\right)    & \leq
\mathbb{P}\left(  \sup_{t\in\left[  0,T\right]  }\left\Vert M_{t}^{N_{k}%
}\right\Vert _{L^{2}}>\delta\right)  \\
& \leq\frac{1}{\delta^{2}}\mathbb{E}\left[  \sup_{t\in\left[  0,T\right]
}\left\Vert M_{t}^{N_{k}}\right\Vert _{L^{2}}^{2}\right]  \leq\frac{C_{T}%
}{\delta^{2}}\frac{N_k^{\epsilon\ast}}{N_k}%
\end{align*}
as in the proof of Lemma \ref{lem:4} (with $\alpha=0$, $p=2$, without
semigroup, using Doob's inequality), hence it goes to zero.
\end{proof}

\begin{prop}\label{prop:passage to the limit}
Let $Q$ be the limit probability measure of some subsequence $Q^{N_{k}}$. Then
$Q$ is supported on the set of mild solutions of system \ref{eq:21}.
\end{prop}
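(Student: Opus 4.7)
The plan is to transfer the identity $\Psi_\phi=0$ from the prelimit measures (where it is asserted in probability by Lemma \ref{lem:pre_convergence}) to the limit measure $Q$, by a Skorohod plus continuity argument, and then upgrade from weak to mild form. Since $Q^{N_k}\to Q$ weakly on the Polish space $C(L^2_{loc})\times C(L^2_{loc})$, Skorohod's representation theorem provides, on an auxiliary probability space, random variables $(\tilde u^{N_k},\tilde m^{N_k})$ of law $Q^{N_k}$ and $(\tilde u,\tilde m)$ of law $Q$ with $(\tilde u^{N_k},\tilde m^{N_k})\to(\tilde u,\tilde m)$ almost surely in $C(L^2_{loc})\times C(L^2_{loc})$.

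For each fixed $\phi\in C_c^\infty(\mathbb{R}^d)$ I would verify that the functional $\Psi_\phi$ is continuous on the support of $Q$. The two linear parts $\langle u_t-u_0,\phi\rangle$ and $\int_0^t\langle u_s,\Delta\phi\rangle ds$ pass to the limit directly because $\phi$ and $\Delta\phi$ are compactly supported, so the pairings are continuous on $C(L^2_{loc})$. For the initial datum, Lemma \ref{lem:initial_condition} combined with the law of large numbers for the i.i.d.\ sequence $X_0^i$ gives $u_0^{N_k}\to u_0$ in $L^2_{loc}$. The nonlinear drift term $\int_0^t\langle u_s,b(u_s,m_s)\cdot\nabla\phi\rangle ds$ is the key point: using that $\nabla\phi$ has compact support, and the exponential decay (\ref{hyp:g}) which yields the Lipschitz bound (\ref{lip 1 b}) with an $e^{-|x-y|}$ kernel, the values of $b(u_s,m_s)(x)$ for $x\in\mathrm{supp}\,\nabla\phi$ depend on $(u_s,m_s)$ only up to an exponentially small tail. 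Splitting the convolution into a contribution over a large ball $B_R$ (handled by $L^2_{loc}$ convergence and the uniform $L^p(\Omega;L^2)$ bound of Lemma \ref{lem:4}) plus a tail (estimated by the exponential kernel together with the same $L^2$ bound), one obtains convergence. The ODE part $\langle m_t-F_\zeta(m_0,\int_0^t u_s\,ds),\phi\rangle$ is treated by Lipschitz continuity of $F_\zeta$ and the continuity of the map $f\mapsto\int_0^t f_s\,ds$ already noted in Proposition \ref{prop:2}.

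Granted this continuity, $\Psi_\phi(\tilde u^{N_k},\tilde m^{N_k})\to\Psi_\phi(\tilde u,\tilde m)$ a.s.; on the other hand Lemma \ref{lem:pre_convergence} gives that the same sequence tends to $0$ in probability. Therefore $\Psi_\phi(\tilde u,\tilde m)=0$ a.s. Taking a countable dense family $\{\phi_n\}\subset C_c^\infty(\mathbb{R}^d)$ in an appropriate test-function topology, we conclude that $Q$-a.s.\ the identity $\Psi_\phi(u,m)=0$ holds for every $\phi\in C_c^\infty(\mathbb{R}^d)$. This exactly expresses $u$ as a distributional solution of the first equation in (\ref{eq:21}) and $m$ in the explicit integral form, which in particular yields the a.s.\ bounds $u_t\geq 0$ and $0\leq m_t\leq M$ via the representation $m_t=F_\zeta(m_0,\int_0^t u_s\,ds)$.

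It remains to rewrite the weak form of the heat-type equation in mild form. By Corollary \ref{corollary tight}, $u\in L^p(0,T;L^2(\mathbb{R}^d))$ a.s.\ for some $p>2$, and the drift $b(u_s,m_s)$ is bounded by (\ref{bounded b}). A standard duality computation with the heat semigroup, testing the weak formulation against $e^{(t-s)A}\phi$ and integrating by parts in the drift, converts the distributional identity into
\[
u_t=e^{tA}u_0+\int_0^t \nabla\cdot e^{(t-s)A}(u_s\,b(u_s,m_s))\,ds,
\]
and Lemma \ref{lemma regularity} then places $u$ in $C([0,T];L^2(\mathbb{R}^d))$, so that $(u,m)\in C_+(L^2)\times C_{0,M}(L^2)$ as required by the definition of mild solution. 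The main obstacle throughout is the continuity of $\Psi_\phi$ at the nonlinear drift term; the exponential kernel in hypothesis (\ref{hyp:g}) together with the uniform bound of Lemma \ref{lem:4} is precisely what allows the localization argument needed to sidestep the fact that only local $L^2$ convergence is available in the limit.
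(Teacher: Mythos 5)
Your proposal is correct and follows essentially the same route as the paper: continuity of $\Psi_{\phi}$ on $C\left(L_{loc}^{2}\right)\times C\left(L_{loc}^{2}\right)$ (proved by the same localization of $b$ exploiting the exponential decay in \eqref{hyp:g}), transfer of the vanishing of $\Psi_{\phi}$ from Lemma \ref{lem:pre_convergence} to the limit law, a countable family of test functions, and the weak-to-mild upgrade via duality with the heat semigroup together with Lemma \ref{lemma regularity}. The only cosmetic difference is that you transfer the limit via Skorohod representation and almost-sure convergence, whereas the paper applies the Portmanteau theorem to the open sets $\left\{\Psi_{\phi}>\delta\right\}$; the two devices are interchangeable here.
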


\begin{proof}
Firstly we observe that the functional is continuous
with respect to the topology of $C\left(  L_{loc}^{2}\right)  \times C\left(
L_{loc}^{2}\right)  $. It holds because $\phi$ is compact support with its derivatives
(this is sufficient to treat the terms $\left\langle u_{t},\phi\right\rangle $
and $\int_{0}^{t}\left\langle u_{s},\Delta\phi\right\rangle ds$), by property (\ref{lip 1 b}) (this fact plus the
previous ones is used to treat the term $\int_{0}^{t}\left\langle
u_{s},b(u_{s},m_{s})\cdot\nabla\phi\right\rangle ds$) and $F_{\zeta}$ is
continuous, $\widetilde{F}_{\zeta}$ is bounded (these facts are used to deal
with the $m$-term). Moreover $b(u,m)$ converges locally uniformly in space when $\left(
u,m\right)  $ converges in $C\left(  L_{loc}^{2}\right)  \times C\left(
L_{loc}^{2}\right) $. This last point is a delicate one, so in the next lines we will prove it. Let us consider a sequence $(u^N,m^N)$ converging in $L^2_{loc}$, fixed $\epsilon>0$ and $x\in B(0,K)$, with $K>0$
\begin{multline*}
|b(u^N,m^N)(x)-b(u,m)(x)|\leq \int_{\mathbb{R}^d} |g(|y-x|,u^N(y),m^N(y))-g(|y-x|,u(y),m(y))|dy=\\
\int_{B(0,R)} |g(|y-x|,u^N(y),m^N(y))-g(|y-x|,u(y),m^N(y))|dy+\\
\int_{B(0,R)^c} |g(|y-x|,u^N(y),m^N(y))-g(|y-x|,u(y),m(y))|dy=
I_1+I_2(R,N)
\end{multline*}
where thanks to hypothesis \eqref{hyp:g}, $R$ can be chosen such that $I_2(R,N)\leq \epsilon/2$, uniformly in $x$. Regarding $I_1$, there exists $N_0$ such that
\begin{equation*}
I_1\leq ||Dg||\int_{B(0,R)}|u^N(y)-u(y)|dy\leq  \epsilon/2\\
\end{equation*}
for all $N>N_0$. Computation including time component are straightforward. So one get that $b(u^N,m^N)$ converges locally uniformly in space to $b(u,m)$.

Thanks to continuity of the functional $\Psi_{\phi}$, by Portmanteau theorem, 
\[
Q\left(  (u,m):\Psi_{\phi}(u,m)>\delta\right)  \leq\liminf_{k}Q^{N_{k}}\left(
(u,m):\Psi_{\phi}(u,m)>\delta\right)  .
\]
Then for Lemma \ref{lem:pre_convergence}
\[
Q\left(  (u,m):\Psi_{\phi}(u,m)>\delta\right)  =0
\]
for every $\delta>0$. By a classical argument, see \cite{KL}
\[
Q\left(  (u,m):\Psi_{\phi}(u,m)=0\quad\forall\phi\in\mathcal{D}\right)  =1.
\]
Thus $Q$
is supported on weak solutions. In addition, by Corollary \ref{corollary tight}, $u$ is also of class
$L^{p}\left(  0,T;L^{2}(\mathbb{R}^{d})\right)  $ for some $p>2$. With proper
choice of $\phi$ related to the heat kernel $\frac{1}{(4\pi t)^{d/2}}%
e^{-\frac{|x-y|^{2}}{4t}}$, one proves that $u$ satisfies the mild
formulation;\ and it is straightforward to see that $m$ satisfies the
differential equation. Hence we have proved that $Q$ is supported by the set
of mild solutions.
\end{proof}

\begin{prop}
Assume that $u^{1},m^{1},u^{2},m^{2}$ are functions of class $C(L^{2})$, such
that $(u^{1},m^{1})$ and $(u^{2},m^{2})$ are mild solutions of the system
\ref{eq:21} corresponding to the same initial condition $(u_{0},m_{0})$, with
$u_{t},m_{t}\geq0$ for every $t\in\left[  0,T\right]  $. Then $(u^{1}%
,m^{1})=(u^{2},m^{2})$.
\end{prop}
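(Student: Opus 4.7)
The plan is to estimate the two differences in $L^2(\mathbb{R}^d)$ and close a weakly singular Gronwall inequality. Set $U(t):=\left\Vert u^1_t-u^2_t\right\Vert_{L^2(\mathbb{R}^d)}$ and $M(t):=\left\Vert m^1_t-m^2_t\right\Vert_{L^2(\mathbb{R}^d)}$; both are continuous and hence bounded on $[0,T]$ because $u^i,m^i\in C(L^2)$, and the common initial datum ensures $U(0)=M(0)=0$.

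For the $u$-component I would subtract the two mild formulations (the $e^{tA}u_{0}$ term cancels) and decompose the integrand as
\[
u^1_s b(u^1_s,m^1_s)-u^2_s b(u^2_s,m^2_s)=(u^1_s-u^2_s)\,b(u^1_s,m^1_s)+u^2_s\bigl(b(u^1_s,m^1_s)-b(u^2_s,m^2_s)\bigr).
\]
Using the semigroup estimate $\left\Vert\nabla\cdot e^{(t-s)A}\right\Vert_{L^2\to L^2}\leq C(t-s)^{-1/2}$ already exploited in Lemma \ref{lemma regularity}, together with the boundedness of $b$ from \eqref{bounded b} on the first summand, and the Lipschitz estimate \eqref{lip 1 b} on the second, the latter giving (via Cauchy--Schwarz applied to the convolution representation, since $e^{-|\cdot|}\in L^2(\mathbb{R}^d)$)
\[
\left\Vert b(u^1_s,m^1_s)-b(u^2_s,m^2_s)\right\Vert_{L^\infty}\leq C\bigl(U(s)+M(s)\bigr),
\]
combined with the uniform-in-$s$ bound on $\left\Vert u^2_s\right\Vert_{L^2}$, I obtain
\[
U(t)\leq C\int_0^t (t-s)^{-1/2}\bigl(U(s)+M(s)\bigr)\,ds.
\]

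For the $m$-component I would invoke the explicit representation $m^i_t(x)=F_\zeta(m_{0}(x),\int_0^t u^i_s(x)\,ds)$ from Proposition \ref{prop particles}. Since $F_\zeta$ is Lipschitz in its second argument on $[0,M]\times[0,\infty)$, a pointwise estimate followed by Minkowski yields $M(t)\leq C\int_0^t U(s)\,ds$, which is in turn dominated by $C\int_0^t (t-s)^{-1/2}U(s)\,ds$ on $[0,T]$. Adding the two bounds and setting $\Phi(t):=U(t)+M(t)$ gives
\[
\Phi(t)\leq C\int_0^t (t-s)^{-1/2}\Phi(s)\,ds;
\]
iterating this inequality once removes the singularity (the self-convolution of $(t-s)^{-1/2}$ equals the constant $\pi$), and the ordinary Gronwall lemma then forces $\Phi\equiv 0$, which is the claim.

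The only subtle point is the control of the cross term $u^2_s\bigl(b(u^1_s,m^1_s)-b(u^2_s,m^2_s)\bigr)$ in $L^2$: one must upgrade \eqref{lip 1 b} to an estimate of $b(u^1,m^1)-b(u^2,m^2)$ in $L^\infty$ in terms of the $L^2$-norms of $u^1-u^2$ and $m^1-m^2$, which is precisely where the exponential decay hypothesis \eqref{hyp:g} is used. Once this step is secured, the rest of the argument is a textbook singular Gronwall computation using only ingredients already recorded in Sections 2 and 3.
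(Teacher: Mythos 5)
Your proposal is correct and follows essentially the same route as the paper: the same splitting of $u^1 b(u^1,m^1)-u^2 b(u^2,m^2)$, the same use of \eqref{bounded b}, \eqref{lip 1 b} with the exponential kernel in $L^2$, the semigroup bound $\left\Vert\nabla\cdot e^{(t-s)A}\right\Vert_{L^2\to L^2}\leq C(t-s)^{-1/2}$, and the explicit Lipschitz representation $m^i_t=F_\zeta\left(m_0,\int_0^t u^i_s\,ds\right)$ for the $m$-difference. The only (immaterial) difference is the closing step: you iterate the weakly singular Gronwall inequality once and apply the ordinary Gronwall lemma, whereas the paper absorbs the supremum on a short interval past the first time of disagreement and argues by contradiction.
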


\begin{proof}
Each $u^{i}$, $i=1,2$, satisfies the identity%
\[
u_{t}^{i}(x)=e^{tA}u_{0}+\int_{0}^{t}\nabla\cdot e^{(t-s)A}%
(u_{s}^{i}b(u_{s}^{i},F_{\zeta}\left(  m_{0}(x),\int_{0}^{s}u_{r}%
^{i}(x)dr\right)  ))ds
\]
where we have used the explicit formula for equation (\ref{micro_ecm}). This
is a closed equation and we are going to prove from it that $u^{1}=u^{2}$. A
fortiori we get also $m^{1}=m^{2}$, again from the explicit formula for
equation (\ref{micro_ecm}).

Assume by contradiction that $u^{1}\neq u^{2}$. Let $t_{0}\in\lbrack0,T)$ the
infimum of all $t\in\left[  0,T\right]  $ such that $u_{t}^{1}\neq u_{t}^{2}$.
On $\left[  0,t_{0}\right]  $ we have $(u^{1},m^{1})=(u^{2},m^{2})$. On
$\left[  t_{0},T\right]  $ we use the mild formula and property (\ref{prop:2})
to get%
\begin{align*}
\left\Vert u_{t}^{1}-u_{t}^{2}\right\Vert _{L^{2}}  &  \leq\int_{t_{0}}%
^{t}\left\Vert \nabla\cdot e^{(t-s)A}(u_{s}^{1}b(u_{s}^{1},m_{s}%
^{1})-u_{s}^{2}b(u_{s}^{2},m_{s}^{2}))\right\Vert _{L^{2}}ds\\
&  \leq\int_{t_{0}}^{t}\frac{C}{|t-s|^{1/2}}\left\Vert u_{s}^{1}b(u_{s}%
^{1},m_{s}^{1})-u_{s}^{2}b(u_{s}^{2},m_{s}^{2})\right\Vert _{L^{2}}ds\\
&  \leq\int_{t_{0}}^{t}\frac{C}{|t-s|^{1/2}}\left(  ||b(u_{s}^{1},m_{s}%
^{1})||_{\infty}\left\Vert u_{s}^{1}-u_{s}^{2}\right\Vert _{L^{2}}+||u_{s}%
^{2}||_{L^{2}}\left\Vert b(u_{s}^{1},m_{s}^{1})-b(u_{s}^{2},m_{s}%
^{2})\right\Vert _{L^{\infty}}\right)  ds.
\end{align*}
Recall that $b$ is bounded, see (\ref{bounded b}), and that $||u_{s}%
^{2}||_{L^{2}}$ is bounded by assumption. Hence%
\[
\left\Vert u_{t}^{1}-u_{t}^{2}\right\Vert _{L^{2}}\leq\int_{t_{0}}^{t}\frac
{C}{|t-s|^{1/2}}\left(  \left\Vert u_{s}^{1}-u_{s}^{2}\right\Vert _{L^{2}%
}+\left\Vert b(u_{s}^{1},m_{s}^{1})-b(u_{s}^{2},m_{s}^{2})\right\Vert
_{L^{\infty}}\right)  ds.
\]
From property (\ref{lip 1 b}) and H\"{o}lder inequality we have%
\begin{align*}
|b(u_{s}^{1},m_{s}^{1})(x)-b(u_{s}^{2},m_{s}^{2})(x)|  &  \leq C\int%
_{\mathbb{R}^{d}}e^{-|x-y|}\left(  \left\vert u_{s}^{1}\left(  y\right)
-u_{s}^{2}\left(  y\right)  \right\vert +\left\vert m_{s}^{1}\left(  y\right)
-m_{s}^{2}\left(  y\right)  \right\vert \right)  dy\\
&  \leq C\left\Vert u_{s}^{1}-u_{s}^{2}\right\Vert _{L^{2}}+C\left\Vert
m_{s}^{1}-m_{s}^{2}\right\Vert _{L^{2}}%
\end{align*}
hence%
\[
\left\Vert u_{t}^{1}-u_{t}^{2}\right\Vert _{L^{2}}\leq\int_{t_{0}}^{t}\frac
{C}{|t-s|^{1/2}}\left(  \left\Vert u_{s}^{1}-u_{s}^{2}\right\Vert _{L^{2}%
}+\left\Vert m_{s}^{1}-m_{s}^{2}\right\Vert _{L^{2}}\right)  ds.
\]
Recalling the explicit formula for equation (\ref{micro_ecm}), we have%
\begin{align*}
\left\vert m_{s}^{1}\left(  x\right)  -m_{s}^{2}\left(  x\right)  \right\vert
&  =\left\vert F_{\zeta}\left(  m_{0}(x),\int_{0}^{s}u_{r}^{1}(x)dr\right)
-F_{\zeta}\left(  m_{0}(x),\int_{0}^{s}u_{r}^{2}(x)dr\right)  \right\vert \\
&  \leq\left\Vert \partial_{b}F_{\zeta}\right\Vert _{\infty}\left\vert
\int_{0}^{s}u_{r}^{1}(x)dr-\int_{0}^{s}u_{r}^{2}(x)dr\right\vert \\
&  \leq C\int_{0}^{s}\left\vert u_{r}^{1}(x)-u_{r}^{2}(x)\right\vert
dr=C\int_{t_{0}}^{s}\left\vert u_{r}^{1}(x)-u_{r}^{2}(x)\right\vert dr
\end{align*}
whence
\[
\left\Vert m_{s}^{1}-m_{s}^{2}\right\Vert _{L^{2}}\leq C_{T}\int_{t_{0}}%
^{s}\left\Vert u_{r}^{1}-u_{r}^{2}\right\Vert _{L^{2}}dr.
\]
Summarising, the function $v_{t}:=\left\Vert u_{t}^{1}-u_{t}^{2}\right\Vert
_{L^{2}}$ satisfies
\[
v_{t}\leq\int_{t_{0}}^{t}\frac{C}{|t-s|^{1/2}}\left(  v_{s}+C_{T}\int_{t_{0}%
}^{s}v_{r}dr\right)  ds.
\]
Given $t_{1}\in\left[  t_{0},T\right]  $ we set $A\left(  t_{1}\right)  $
$:=\sup_{t\in\left[  t_{0},t_{1}\right]  }v_{t}$. Then on the interval
$t\in\left[  t_{0},t_{1}\right]  $ we have%
\[
v_{t}\leq\int_{t_{0}}^{t}\frac{C}{|t-s|^{1/2}}\left(  A\left(  t_{1}\right)
+C_{T}^{\prime}A\left(  t_{1}\right)  \right)  ds=C^{\prime\prime}A\left(
t_{1}\right)  \left(  t-t_{0}\right)  ^{1/2}.
\]
It follows that
\[
A\left(  t_{1}\right)  \leq C^{\prime\prime}A\left(  t_{1}\right)  \left(
t_{1}-t_{0}\right)  ^{1/2}.
\]

If $t_{1}-t_{0}>0$ is small enough, we deduce $A\left(  t_{1}\right)  =0$,
hence $u^{1}=u^{2}$ on $\left[  t_{0},t_{1}\right]  $, in contradiction with
the definition of $t_{0}$.
\end{proof}

\section{Simulations}

The aim of this section is to show the flexibility of the model, namely how it
may catch different kinds of aggregations. For instance, we may avoid
arbitrary concentration (even with infinitesimal noise), opposite to most of
the models in the literature; but we cover also the case of concentration,
both in the case of single and multiple concentration points. Each numerical simulation shown below is given by the following choice of parameters: number of particles $N=100$, parameter of diffusion $\sigma^2=0.1$, discretization of time $dt=10^{-4}$, Kernel smoothing parameter $\beta=0.9$ and on the initial condition we made a simple choice, choosing just a realization of uniform distribution on the square $[0,2]\times [0,2]$. 
\subsection{Degenerate aggregation}

Let us start from the most basic example, the case when each particle is
attracted by the others. Recall we model interaction between individuals and
density of population; hence each individual is pushed to high population
density regions. A standard choice for $g$ could be the following one:
\[
g(r,u)=e^{-r}\cdot\frac{u}{1+u}\quad\text{or}\quad g(r,u)=e^{-r}\cdot\tanh(u)
\]
\begin{figure}[htbp]
\centering
{\includegraphics[width=7cm]{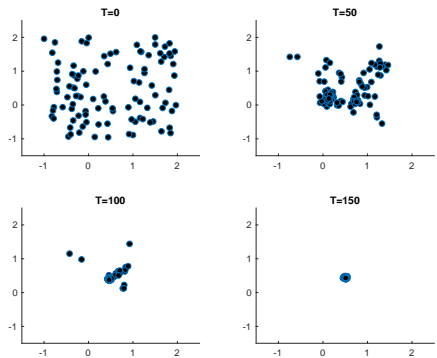}}
 \caption{Configuration of $100$ particles respectively at times $T=0,50,100,150$ with $g(r,u)=e^{-r}\cdot\frac{u}{1+u}$.}
 \label{degenerate}
\end{figure}
%

Notice that with this choice, cells continue to aggregate even at high
density. The population mass tends to concentrate into a single point (see figure $1$)

\subsection{Moderate aggregation}

Let us now include also a repulsive component in the force, to avoid collapse
of the total mass. (see figure $3$). The function $g(r,u)$ we look for should have the following features (see figure $2$):


\begin{figure}[htbp]
\centering
\includegraphics[width=7cm]{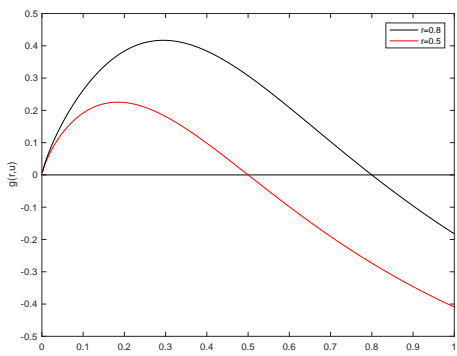}
\caption{Plot of the the function $r\mapsto g(r,u)$, for two different values of $r$. In black $r=0.8$, in red $r=0.5$}\label{intensity}
\end{figure}

\begin{itemize}
\item given the distance $r$, $g(r,u)$ is such that the force is aggregative
for small density and repulsive for huge density. This behavior is natural in
certain cases for animals: each individual is attracted by its similar, but
it does not where there are too many;

\item but there is an issue when we quantify small and huge density: this
quantification should depend on distance. At big distances, we expect that
aggregation is more relevant, and the individual tends to avoid only really
huge densities. On the contrary, at short distances, each individual is
attracted only by very small densities. 
\end{itemize}

The function we propose is the following one:
\[
g(r,u):=\frac{u\cdot\log\left(  \frac{r}{u}\right)  }{1-u\cdot\log\left(
\frac{r}{u}\right)  }%
\]

Another example could be
\[
g(r,u):=e^{-r}\cdot\frac{u\cdot(\alpha-u)}{1+u}
\]
where the parameter $\alpha$ can be interpreted as an index of overcrowding;
choosing properly $\alpha$, particles aggregate, without collapsing. The main
drawback we have observed in simulations about this alternative is its strong
sensibility to the choice of the parameter $\alpha$ with respect to the
initial configuration. The first option we propose is more stable. 

Notice that the functions $g(r,u)$ of this subsection are not product of
functions of the two single variables, namely $g(r,u)=g_{1}\left(  r\right)
g_{2}\left(  u\right)  $. 


\begin{figure}[htbp]
        \centering
        {\includegraphics[width=7cm]{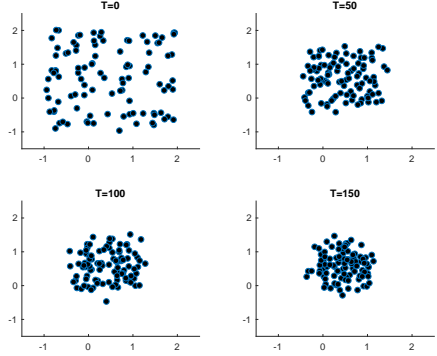}}
        \caption{Configuration of $100$ particles respectively at times $T=0,50,100,150$ with $g(r,u):=\frac{u\cdot\log\left(  \frac{r}{u}\right)  }{1-u\cdot\log\left(
\frac{r}{u}\right)  }$. }
\label{moderate2}
\end{figure}

\subsection{Aggregation in clusters}

Going back to the first model, $g(r,u)=e^{-r}\cdot\frac{u}{1+u}$, an
interesting variant is when attraction happened only up to a certain distance (see figure $4$):
\[
g_R(r,u):=\frac{u}{1+u}\cdot\exp\left(-\frac{r^2}{R}\right)
\]

With this choice we observe the formation of clusters of individuals. Clearly,
the parameter $R$ influence on the number of clusters that are generated: for
big $R$, population aggregate in a reduced number of clusters.

When $t$ goes to infinity, if the noise is infinitesimal, each cluster reduces
to a point, and maybe due to noise different clusters may meet and collapse.
\label{cluster}

\begin{figure}[h]
\centering
{\includegraphics[width=7cm]{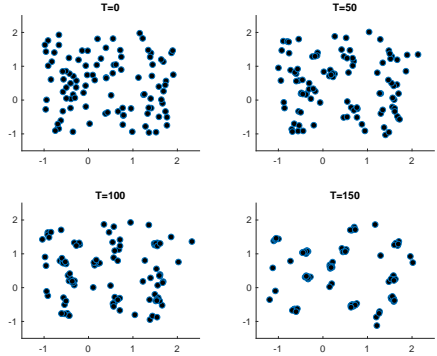}}
       \caption{Configuration of $100$ particles respectively at times $T=0,50,100,150$ with $g_R(r,u):=\frac{u}{1+u}\cdot\exp\left(-\frac{r^2}{R}\right)$ and $R=0.3$. }
        \label{cluster}
\end{figure}

\subsection{Moderate aggregation in clusters}

We may mix-up the previous two features. The following example has a tendency
to construct clusters (see figure $5$), but they remain of finite size (independently of the
noise):
\[
g_R(r,u):=\frac{u\cdot(\alpha-u)}{1+u}\cdot\exp\left(-\frac{r^2}{R}\right)
\]


\begin{figure}[h]
        \centering
          {\includegraphics[width=7cm]{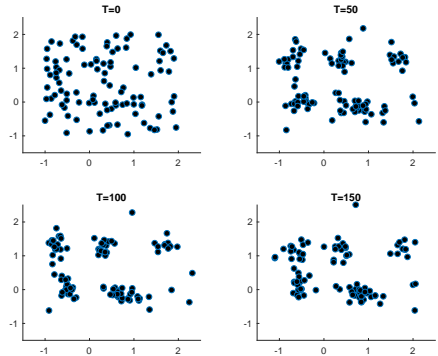}}
       \caption{Configuration of $100$ particles respectively at times $T=0,50,100,150$ with $g_R(r,u):=\frac{u\cdot(\alpha-u)}{1+u}\cdot\exp\left(-\frac{r^2}{R}\right)$ with $R=0.3$ and $\alpha=1.3$. }
        \label{cluster_moderate}
\end{figure}

\newpage

\end{document}